\newtheorem{thm}{Theorem}[section]
\newtheorem{lem}[thm]{Lemma}
\numberwithin{equation}{section}
\renewcommand{\thefootnote}
\begin{document}

\begin{center}
{\large\bf Some fast convergent series for  the mathematical\\
constants $\zeta(4)$ and $\zeta(5)$
 \footnote{ The work is supported by the National Natural Science Foundation of China (No. 12071103).}}
\end{center}

\renewcommand{\thefootnote}{$\dagger$}

\vskip 2mm \centerline{Chuanan Wei}
\begin{center}
{School of Biomedical Information and Engineering\\ Hainan Medical
University, Haikou 571199, China
\\
 Email address: weichuanan78@163.com}
\end{center}


\vskip 0.7cm \noindent{\bf Abstract.} Recently, Sun [preprint,
arXiv: 2210.07238v7] proposed two conjectural series for the
mathematical constant $\zeta(4)$ and two conjectural series for the
mathematical constant $\zeta(5)$. In terms of the operator method
and two hypergeometric transformations, we
 prove these four conjectures. Furthermore, we also find
 some new series for the two
 constants in this paper.

\vskip 3mm \noindent {\it Keywords}:
 the mathematical
constant $\zeta(4)$; the mathematical constant $\zeta(5)$; the
hypergeometric series; hypergeometric transformation

 \vskip 0.2cm \noindent{\it AMS
Subject Classifications:} 33D15; 05A15

\section{Introduction}

 For two complex sequences
$\{a_i\}_{i\geq1}$, $\{b_j\}_{j\geq1}$ and  a complex variable $z$,
define the hypergeometric series to be
$$
_{r+1}F_{r}\left[\begin{array}{c}
a_1,a_2,\ldots,a_{r+1}\\
b_1,b_2,\ldots,b_{r}
\end{array};\, z
\right] =\sum_{k=0}^{\infty}\frac{(a_1)_k(a_2)_k\cdots(a_{r+1})_k}
{(1)_k(b_1)_k\cdots(b_{r})_k}z^k,
$$
where the shifted-factorial has been given by
\begin{align*}
(x)_0=1 \quad\text{and}\quad (x)_n=x(x+1)\cdots(x+n-1) \quad
\text{for} \quad n\in \mathbb{Z}^{+}.
\end{align*}
Then a hypergeometric transformation between two  $_9F_8$ series
(cf.\cite[P. 27]{Bailey}) can be expressed as
\begin{align}
&{_{9}F_{8}}\left[\begin{array}{cccccccc}
  a,1+\frac{a}{2},b,c,d,e,f,g,-n\\
  \frac{a}{2},1+a-b,1+a-c,1+a-d,1+a-e,1+a-f,1+a-g,1+a+n
\end{array};1\right]
\notag\\
&=\frac{(1+a)_n(1+\lambda-e)_n1+\lambda-f)_n1+\lambda-g)_n}{(1+\lambda)_n(1+a-e)_n(1+a-f)_n(1+a-g)_n}
\notag\\
&\times{_{9}F_{8}}\left[\begin{array}{cccccccc}
  \lambda,1+\frac{\lambda}{2},\lambda+b-a,\lambda+c-a,\lambda+d-a,e,f,g,-n\\
  \frac{\lambda}{2},1+a-b,1+a-c,1+a-d,1+\lambda-e,1+\lambda-f,1+\lambda-g,1+\lambda+n\end{array};1
\right], \label{9F8}
\end{align}
where $\lambda=1+2a-b-c-d$ and $2+3a=b+c+d+e+f+g-n$.

For $\ell,n\in \mathbb{Z}^{+}$, define the generalized $\ell$-order
harmonic numbers by
\[H_{n}^{(\ell)}(x)=\sum_{k=1}^n\frac{1}{(x+k)^{\ell}},\]
where  $x$ is a complex variable. The $x=0$ case of them are the
$\ell$-order harmonic numbers
\[H_{n}^{(\ell)}=\sum_{k=1}^n\frac{1}{k^{\ell}}.\]
For $s\in C$ with $\mathfrak{R}(s)>1$, define the Riemann zeta
function as
$$\zeta(s)=\sum_{k=0}^{\infty}\frac{1}{k^s}.$$
Three nice series for $\zeta(3)$ read
\begin{align}
&\sum_{k=0}^{\infty}\bigg(\frac{-1}{1024}\bigg)^k\frac{(1)_k^5}{(\frac{3}{2})_k^5}(77+250k+205k^2)=64\zeta(3),
\label{zeta-ta}\\[1mm]
&\quad\sum_{k=0}^{\infty}\bigg(\frac{-1}{4}\bigg)^k\frac{(1)_k^5}{(\frac{3}{2})_k^5}(5+14k+10k^2)=\frac{7}{2}\zeta(3),
\label{zeta-tb}\\[1mm]
&\quad\:\sum_{k=0}^{\infty}\bigg(\frac{1}{16}\bigg)^k\frac{(1)_k^2}{(\frac{3}{2})_k^2}
\frac{19+30k}{(1+k)(1+2k)}=16\zeta(3). \label{zeta-td}
\end{align}
where \eqref{zeta-ta} is due to Amdeberhan and Zeilberger
\cite{Amdeberhan-b}, \eqref{zeta-tb} owns to Guillera
\cite{Guillera}, and \eqref{zeta-td} can be viewed in Chu and Zhang
\cite{Chu-b}. It is well known that
\begin{align*}
\zeta(4)=\sum_{k=0}^{\infty}\frac{1}{k^4}=\frac{\pi^4}{90}.
\end{align*}
In the development history of mathematics, series for $\zeta(4)$ and
$\zeta(5)$ are rare. We state two formulae as follows:
\begin{align}
&\qquad\qquad\qquad\quad
\sum_{k=1}^{\infty}\frac{H_k}{k^3}=\frac{5}{4}\zeta(4),
\label{zeta4-a}\\[1mm]
 &\sum_{k=0}^{\infty}\bigg(\frac{-1}{4}\bigg)^k\frac{(1)_k}{(\frac{3}{2})_k}\bigg\{\frac{4}{(1+k)^4}-\frac{5H_k^{(2)}}{(1+k)^2}\bigg\}=4\zeta(5),
\label{zeta4-b}
\end{align}
where \eqref{zeta4-a} first appeared in a 1742 letter by Goldbach to
Euler (cf. \cite{Weisstein}) and \eqref{zeta4-b} is from Borwein and
Bradley \cite{Borwein}. For more results on mathematical constants,
the reader is referred to
 the papers \cite{Au,Schlosser,Chan,Guo,Wang}.

Motivated by \eqref{zeta-ta} and \eqref{zeta-tb}, Sun
\cite[Equations (4.1), (4.3), (4.6), and (4.8)]{Sun} proposed the
following four surprising conjectures.

\begin{thm}\label{thm-a}
\begin{align}
\sum_{k=0}^{\infty}\bigg(\frac{-1}{1024}\bigg)^k\frac{(1)_k^5}{(\frac{3}{2})_k^5}
\Big\{(77+250k+205k^2)\Big(H_{1+2k}-H_{k}\Big)-41k-25\Big\}=48\zeta(4).
 \label{eq:wei-a}
\end{align}
\end{thm}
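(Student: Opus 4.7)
My plan is to prove \eqref{eq:wei-a} by coupling the hypergeometric transformation \eqref{9F8} with the operator method, i.e., differentiation in an auxiliary parameter $\varepsilon$. The overall strategy is first to recover the Amdeberhan--Zeilberger identity \eqref{zeta-ta} as a specialization of \eqref{9F8} in the limit $n\to\infty$, then to introduce an infinitesimal perturbation, and finally to differentiate at $\varepsilon=0$ so that the half-integer versus integer Pochhammers generate the combination $H_{1+2k}-H_{k}$ while the $\varepsilon$-dependence of the polynomial prefactor supplies the correction $-41k-25$.

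Concretely, I first choose values of $a,b,c,d,e,f,g$ in \eqref{9F8} subject to the balance $2+3a=b+c+d+e+f+g-n$ and the Bailey well-poised constraint $\lambda=1+2a-b-c-d$ so that, upon letting $n\to\infty$, the left-hand series reduces to the one in \eqref{zeta-ta} (up to the weight $(-1/1024)^{k}(1)_{k}^{5}/(\tfrac{3}{2})_{k}^{5}$ and the quadratic multiplier $77+250k+205k^{2}$). This matching is dictated by the identification of the denominator $(\tfrac{3}{2})_{k}^{5}$ and the argument $-1/1024=-1/2^{10}$, both of which force the six shifted parameters to sit at appropriately conjugate half-integer positions around $a$, with the $n$-shifted Pochhammers in the prefactor producing the quadratic $77+250k+205k^{2}$ after the limit.

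Next, I shift one free parameter by $\varepsilon$ (and compensate the others so the well-poised structure and the balance condition are preserved), and then apply $\partial/\partial\varepsilon$ at $\varepsilon=0$ to both sides of \eqref{9F8}. On the left, each summand acquires a factor equal to the $\varepsilon$-logarithmic derivative of its Pochhammer ratio, which is a finite linear combination of harmonic numbers; after simplification, the half-integer versus integer mix collapses to $2H_{1+2k}-2H_{k}$ times the quadratic, and the $\varepsilon$-derivative of the prefactor yields precisely the lower-order affine term $-41k-25$ (these two contributions being the only sources of the $\varepsilon$-dependence). On the right, the closed-form ratio of Pochhammers in \eqref{9F8} becomes, after differentiation and the $n\to\infty$ limit, a finite combination of weight-$4$ constants.

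The main obstacle is the final algebraic reduction on the right-hand side. After differentiation the right side is a linear combination of $\zeta(4)$, $\zeta(2)^{2}$, and Euler-type double sums such as $\sum_{k\geq 1}H_{k}/k^{3}$; showing that, under the chosen specialization and parameter shift, the only surviving contribution is exactly $48\zeta(4)$ will require using the Euler relation $\zeta(2)^{2}=\tfrac{5}{2}\zeta(4)$ together with the classical evaluation \eqref{zeta4-a}. A secondary concern is justifying the interchange of $\partial/\partial\varepsilon$ with the $n\to\infty$ limit; since \eqref{9F8} is an identity of terminating series for each finite $n$, this is handled by a standard Tannery-type dominated-convergence argument on the hypergeometric tail.
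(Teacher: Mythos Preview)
Your plan has a genuine gap: it rests on the $_9F_8$ transformation \eqref{9F8}, but that identity relates one very-well-poised $_9F_8$ to \emph{another} $_9F_8$ multiplied by a Pochhammer ratio. When you write that ``on the right, the closed-form ratio of Pochhammers in \eqref{9F8} becomes \ldots a finite combination of weight-$4$ constants,'' you are silently discarding the right-hand $_9F_8$ series. Unless your (unspecified) specialization forces one of the numerator parameters $\lambda+b-a,\lambda+c-a,\lambda+d-a,e,f,g$ to vanish so that this series collapses to its $k=0$ term, it does not go away; and even if it does collapse at $\varepsilon=0$, its $\varepsilon$-derivative will contribute a nontrivial series of harmonic type that you then have to evaluate. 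You never name the parameter choice, so one cannot check whether any such collapse occurs, and you explicitly flag the right-hand evaluation as ``the main obstacle'' --- that is precisely where the argument is missing rather than merely routine.

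The paper avoids this difficulty by using a different transformation altogether: the Chu--Zhang acceleration identity \eqref{equation-aa} (their Theorem~31), whose right-hand side is not a $_9F_8$ but the elementary series
\[
\sum_{k\ge 0}(a+2k)\frac{(b)_k(c)_k(d)_k(e)_k}{(1+a-b)_k(1+a-c)_k(1+a-d)_k(1+a-e)_k}.
\]
Specializing $(a,c,d,e)=(2,1,1,1)$ turns this into $\sum_{k\ge 0}\tfrac{2}{(1+k)^2}\tfrac{(b)_k}{(3-b)_k}$, whose value at $b=1$ is $2\zeta(3)$ and whose $b$-derivative at $b=1$ reduces immediately, via \eqref{zeta4-a}, to a multiple of $\zeta(4)$. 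The left-hand side of \eqref{equation-aa} under the same specialization is already the Amdeberhan--Zeilberger summand with a free parameter $b$, so differentiating in $b$ produces exactly the combination $H_{1+2k}-H_k$ and the affine correction $-41k-25$. In short, the key lemma is \eqref{equation-aa}, not \eqref{9F8}; your outline points at the wrong transformation and, as written, does not close.
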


\begin{thm}\label{thm-b}
\begin{align}
\sum_{k=0}^{\infty}\bigg(\frac{-1}{1024}\bigg)^k\frac{(1)_k^5}{(\frac{3}{2})_k^5}
\Big\{(77+250k+205k^2)\Big[4H_{1+2k}^{(2)}-12H_{k}^{(2)}\Big]-43\Big\}=256\zeta(5).
 \label{eq:wei-b}
\end{align}
\end{thm}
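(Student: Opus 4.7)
My strategy is to derive \eqref{eq:wei-b} from the two-term $_9F_8$ transformation \eqref{9F8} via the operator method: I introduce a perturbation parameter $\epsilon$ into the free parameters on both sides, and then extract the coefficient of $\epsilon^2$ in the resulting $\epsilon$-expansion of the identity. The second derivatives of Pochhammer symbols at $\epsilon=0$ produce exactly the combinations of second-order harmonic numbers that appear in the target identity.

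First I would choose a parametrization of $(a,b,c,d,e,f,g)$ in \eqref{9F8} depending on $\epsilon$, consistent with the two balance constraints $\lambda = 1+2a-b-c-d$ and $2+3a = b+c+d+e+f+g-n$, so that after letting $n\to\infty$ and setting $\epsilon=0$ the identity collapses to the Amdeberhan--Zeilberger series \eqref{zeta-ta}. This fixes the summand shape $(-1/1024)^k(1)_k^5/(3/2)_k^5$ and the quadratic weight $77+250k+205k^2$ on the left. The right-hand side, after the same limit, should reduce to a closed-form $\Gamma$-quotient times a short hypergeometric residue whose $\epsilon$-expansion is computable term by term.

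Next I would Taylor-expand both sides to second order in $\epsilon$. Using
\[
\frac{\partial^2}{\partial\epsilon^2}\log(1+\epsilon)_k\Big|_{\epsilon=0}=-H_k^{(2)},\qquad
\frac{\partial^2}{\partial\epsilon^2}\log\bigl(\tfrac{3}{2}+\tfrac{\epsilon}{2}\bigr)_k\Big|_{\epsilon=0}=-\sum_{j=0}^{k-1}\frac{1}{(2j+3)^2},
\]
together with the splitting $H_{1+2k}^{(2)} = \tfrac{1}{4}H_k^{(2)} + \sum_{j=0}^{k}(2j+1)^{-2}$, the $\epsilon^2$-coefficient of each left-hand summand should reduce to $(77+250k+205k^2)\bigl[4H_{1+2k}^{(2)}-12H_k^{(2)}\bigr]$ plus a $k$-independent remainder that absorbs the constant $-43$. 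On the right-hand side, the $\epsilon^2$-coefficient (after the $n\to\infty$ limit) should evaluate as a finite linear combination of classical Euler sums, and standard reductions should collapse the combination to $256\zeta(5)$.

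The principal obstacle is the first step: locating the precise perturbation profile that (i) respects both balance constraints in \eqref{9F8}, (ii) reproduces \eqref{zeta-ta} at $\epsilon=0$ so that the $\epsilon^0$-identity is a tautology, and (iii) produces \emph{exactly} the coefficient pair $(4,-12)$ in front of $\bigl(H_{1+2k}^{(2)},\,H_k^{(2)}\bigr)$ with the correct normalization $256$ on the right. This requires splitting $\epsilon$ asymmetrically between parameters of the form $1+\epsilon$ and $\tfrac{3}{2}+\tfrac{\epsilon}{2}$, and the ``two hypergeometric transformations'' mentioned in the abstract suggest that a second classical identity (likely a Whipple- or Dougall-type evaluation) is needed to handle the limiting right-hand side. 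Once the perturbation is pinned down, the subsequent differentiations and the Euler-sum reduction to $256\zeta(5)$ should be algebraically routine.
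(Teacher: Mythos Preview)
Your plan has the right spirit---differentiate a one-parameter family of hypergeometric identities and collect the $H^{(2)}$-terms---but two concrete choices are off, and together they leave a real gap.

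\textbf{Wrong base identity.} The paper does \emph{not} obtain \eqref{eq:wei-b} from the ${}_9F_8$ transformation \eqref{9F8}. It starts instead from Chu--Zhang's acceleration identity \eqref{equation-aa} (their Theorem~31), specialised at $(a,c,d,e)=(2,\,2-b,\,1,\,1)$. The point of that choice is that the \emph{right-hand side} collapses to the elementary series
\[
\sum_{k\ge 0}\frac{2}{1+k}\,\frac{(b)_k(2-b)_k}{(3-b)_k(1+b)_k}
=\sum_{k\ge 0}\frac{2b(2-b)}{(1+k)(b+k)(2-b+k)},
\]
whose $b\to 1$ limit and its $b$-derivatives are computable in closed form in terms of $\zeta(3)$ and $\zeta(5)$. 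If you work instead from \eqref{9F8}, both sides stay genuine ${}_9F_8$ series after specialisation, and you have no mechanism to evaluate the transformed side as a finite combination of zeta values; your proposal's hope that the right side ``reduces to a closed-form $\Gamma$-quotient times a short hypergeometric residue'' is exactly the step that does not go through with \eqref{9F8} alone.

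\textbf{Wrong order of differentiation.} The paper does not take a second $\epsilon$-derivative. Because the specialisation $(c,d,e)\mapsto(2-b,1,1)$ makes the identity \emph{symmetric} under $b\leftrightarrow 2-b$, a single application of $\mathcal{D}_b$ produces an expression that is odd in $b-1$; dividing by $2-2b$ and using
\[
\frac{H_m(x+u)-H_m(v-x)}{\,v-u-2x\,}=\sum_{i=1}^{m}\frac{1}{(x+u+i)(v-x+i)}
\]
before letting $b\to 1$ is what manufactures the second-order harmonic numbers $H_k^{(2)}$ and $H_{1+2k}^{(2)}$ on the left and $\zeta(5)$ on the right. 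That symmetry trick is the ``missing idea'' in your outline: it simultaneously kills the first-order $H_k$-terms (so no spurious $\zeta(4)$ appears) and fixes the coefficient pair $(4,-12)$ without any search over perturbation profiles. Once this is in place one lands on
\[
\sum_{k\ge 0}\Bigl(\tfrac{-1}{1024}\Bigr)^k\frac{(1)_k^5}{(\tfrac32)_k^5}
\Bigl\{(77+250k+205k^2)\bigl[4H_{1+2k}^{(2)}-12H_k^{(2)}-4\bigr]-43\Bigr\}
=256\zeta(5)-256\zeta(3),
\]
and a single call to \eqref{zeta-ta} removes the $-4$ inside the bracket and the $-256\zeta(3)$ on the right, giving \eqref{eq:wei-b}. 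Your ``principal obstacle'' of locating the asymmetric $\epsilon$-split is therefore an artifact of the wrong starting transformation; with \eqref{equation-aa} and the $b\leftrightarrow 2-b$ symmetry there is nothing to search for.
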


\begin{thm}\label{thm-c}
\begin{align}
\sum_{k=0}^{\infty}\bigg(\frac{-1}{4}\bigg)^k\frac{(1)_k^5}{(\frac{3}{2})_k^5}
\Big\{(5+14k+10k^2)\Big(2H_{1+2k}-H_{k}\Big)-3k-2\Big\}=\frac{45}{8}\zeta(4).
 \label{eq:wei-c}
\end{align}
\end{thm}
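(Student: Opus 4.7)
The plan is to apply the operator method (differentiation with respect to a small parameter) to a one-parameter deformation of Guillera's identity \eqref{zeta-tb}. Since the left-hand side of \eqref{eq:wei-c} carries exactly the same hypergeometric prefactor $(-1/4)^k(1)_k^5/(\tfrac{3}{2})_k^5$ and the same polynomial kernel $5+14k+10k^2$ as \eqref{zeta-tb}, the harmonic combination $2H_{1+2k}-H_k$ and the correction $3k+2$ ought to emerge by differentiating a parameterized $_9F_8$ identity that specializes to \eqref{zeta-tb} at the base point.

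Concretely, I would first reverse-engineer the specialization of \eqref{9F8} that produces \eqref{zeta-tb} in the limit $n\to\infty$ (essentially the Bailey/Guillera-type setup with $a=b=c=d=1/2$ and $e,f,g$ chosen so that, after the limit, the prefactor collapses to $(1)_k^5/(\tfrac{3}{2})_k^5$ and the polynomial coefficient collapses to $5+14k+10k^2$). I would then insert a perturbation $\epsilon$ into one (or a carefully matched pair) of these parameters, obtaining an identity $F(\epsilon)=G(\epsilon)$ that reduces to \eqref{zeta-tb} at $\epsilon=0$. Applying $d/d\epsilon\big|_{\epsilon=0}$ and using the log-derivative identities
\[
\frac{d}{d\epsilon}\log(1+\epsilon)_k\Big|_{\epsilon=0}=H_k,\qquad \frac{d}{d\epsilon}\log\bigl(\tfrac{3}{2}+\epsilon\bigr)_k\Big|_{\epsilon=0}=2H_{1+2k}-H_k-2,
\]
the $(\tfrac{3}{2})_k^5$ denominators contribute the desired factor $(5+14k+10k^2)(2H_{1+2k}-H_k)$, while the constant shift $-2$ together with the $\epsilon$-derivative of the (parameter-dependent) polynomial kernel itself combine to produce exactly the $-(3k+2)$ correction, using \eqref{zeta-tb} to re-evaluate the resulting polynomial sum.

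On the right-hand side, the parameterized identity returns Guillera's value $\tfrac{7}{2}\zeta(3)$ at $\epsilon=0$, and its $\epsilon$-derivative is a short linear combination of digamma and trigamma values at $1$ and $1/2$. Using $\psi'(1)=\zeta(2)$, $\psi'(\tfrac{1}{2})=3\zeta(2)$, the reflection formula, and the Euler identity $\zeta(2)^2=\tfrac{5}{2}\zeta(4)$, this combination should collapse to $\tfrac{45}{8}\zeta(4)$, completing the proof.

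The principal obstacle is the first step: pinning down the precise perturbation pattern of the parameters $a,b,c,d,e,f,g$ in \eqref{9F8} so that differentiation reproduces the left-hand side of \eqref{eq:wei-c} \emph{exactly}, with no extraneous residues. Typically this requires perturbing several parameters simultaneously with signed coefficients chosen so that unwanted $H$-contributions cancel in pairs; the combinatorial bookkeeping of which Pochhammer symbols shift by $+\epsilon$ versus $-\epsilon$, and how the $\lambda=1+2a-b-c-d$ balance condition is preserved under the perturbation, is the delicate part. Once the correct perturbation is identified, both the left-hand side simplification and the right-hand side polygamma evaluation reduce to routine (if tedious) manipulation.
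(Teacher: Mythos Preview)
Your overall instinct---differentiate a one-parameter deformation of Guillera's identity and let the log-derivatives of the Pochhammer symbols spit out the harmonic combination---is exactly the strategy the paper uses. But the proposal as written has a real gap: you have not identified the deformed identity, and the candidate you name will not work as stated. The transformation \eqref{9F8} relates one very-well-poised ${}_9F_8$ to another; specializing it does not by itself produce \eqref{zeta-tb}, and after inserting a parameter $\epsilon$ you would have complicated hypergeometric series on \emph{both} sides, each generating its own harmonic contributions under $d/d\epsilon$. There is then no reason to expect the right-hand side to collapse to a finite polygamma expression, and your sentence ``its $\epsilon$-derivative is a short linear combination of digamma and trigamma values'' is an assertion, not an argument.

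What the paper actually does is a two-step manoeuvre. First it uses a different transformation, \eqref{equation-bb} (Chu--Zhang, Theorem~9), which with $(a,b,d,e)=(\tfrac32,\tfrac12,1,1)$ expresses a $c$-deformation of the Guillera sum as an \emph{elementary} series on the right. Only then is \eqref{9F8} invoked, and merely to rewrite that elementary right-hand side in a still simpler form (equation \eqref{eq:wei-hh}) whose derivative at $c=1$ can be summed explicitly to $\tfrac{45}{8}\zeta(4)-\tfrac{7}{2}\zeta(3)$; combining with \eqref{zeta-tb} removes the $\zeta(3)$ term. The crucial missing ingredient in your plan is the first transformation, which is what guarantees a tractable right-hand side after differentiation. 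Your final paragraph correctly flags ``pinning down the precise perturbation pattern'' as the obstacle, but the resolution is not a clever choice of $\epsilon$-shifts inside \eqref{9F8}; it is to start from a different identity altogether.
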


\begin{thm}\label{thm-d}
\begin{align}
\sum_{k=0}^{\infty}\bigg(\frac{-1}{4}\bigg)^k\frac{(1)_k^5}{(\frac{3}{2})_k^5}
\Big\{(5+14k+10k^2)\Big[4H_{1+2k}^{(2)}-3H_{k}^{(2)}\Big]-2\Big\}=\frac{31}{2}\zeta(5).
 \label{eq:wei-d}
\end{align}
\end{thm}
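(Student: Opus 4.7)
The strategy is to apply the operator (parameter differentiation) method to Bailey's $_9F_8$ transformation \eqref{9F8}, the engine behind Guillera's identity \eqref{zeta-tb}. The observation is that \eqref{zeta-tb} arises from a specific specialization of \eqref{9F8} in the limit $n\to\infty$; by perturbing several of the free parameters with a small auxiliary variable $\varepsilon$ and extracting the coefficient of $\varepsilon^2$, one obtains a new identity whose summand involves $H^{(2)}$-type harmonic numbers and whose right-hand side is an Euler sum of weight $5$.

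The first step is to introduce affine shifts $b = 1+\alpha_b\varepsilon$, $c = 1+\alpha_c\varepsilon$, and so on, in the parent specialization, chosen so that the balance conditions $\lambda = 1+2a-b-c-d$ and $2+3a=b+c+d+e+f+g-n$ remain satisfied and so that the limiting summand on the left keeps the base factor $(1)_k^5/(\tfrac32)_k^5\cdot(-1/4)^k$. The second step is to expand both sides to order $\varepsilon^2$ using
\[\frac{(x+\alpha\varepsilon)_k}{(x)_k} = 1 + \alpha\varepsilon\,A_k(x) + \frac{\alpha^2\varepsilon^2}{2}\bigl(A_k(x)^2 - B_k(x)\bigr) + O(\varepsilon^3),\]
where $A_k(x)=\sum_{j=0}^{k-1}(x+j)^{-1}$ and $B_k(x)=\sum_{j=0}^{k-1}(x+j)^{-2}$, together with the conversion $B_k(\tfrac{3}{2}) = 4H_{2k+1}^{(2)} - H_k^{(2)} - 4$ (and the analogous identity for $A_k$). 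Choosing the shifts $\alpha_i$ so that the $A_k(x)\cdot A_k(y)$ cross-terms in the $\varepsilon^2$ expansion cancel reduces the left-hand side precisely to the summand of \eqref{eq:wei-d}.

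The right-hand side of the perturbed \eqref{9F8}, after the limit $n\to\infty$, contains a second hypergeometric piece that can be summed in closed form via a second classical hypergeometric evaluation (this being the second transformation referred to in the abstract); differentiating this closed form twice in $\varepsilon$ produces a finite combination of Euler sums of weight $5$. Applying standard evaluations such as $\sum_{k\ge1} H_k^{(2)}/k^3 = 3\zeta(2)\zeta(3) - \tfrac{9}{2}\zeta(5)$ and $\sum_{k\ge1} H_k/k^4 = 3\zeta(5) - \zeta(2)\zeta(3)$ then collapses the result to $\tfrac{31}{2}\zeta(5)$.

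The main obstacle I anticipate is the $\varepsilon^2$ bookkeeping: several $A_k(x)\cdot A_k(y)$ products must be rearranged and cancelled using the linear constraints on the $\alpha_i$, and on the right-hand side all $\zeta(2)\zeta(3)$ contributions must cancel exactly to leave a pure multiple of $\zeta(5)$. A useful built-in sanity check is that the $\varepsilon^0$ and $\varepsilon^1$ coefficients of the same parametrized identity must reproduce \eqref{zeta-tb} and Theorem \ref{thm-c} respectively — compatibilities that strongly constrain the shifts $\alpha_i$ before any $\varepsilon^2$ computation is attempted.
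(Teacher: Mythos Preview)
Your outline has the right flavour (parameter differentiation of a hypergeometric identity to manufacture $H^{(2)}$ terms), but both the parent identity and the deformation scheme differ from the paper's, and your plan contains an internal inconsistency.

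The paper does \emph{not} differentiate \eqref{9F8} itself. Its primary engine is Chu--Zhang's transformation \eqref{equation-bb}, specialized at $(a,b,d,e)=(\tfrac32,\tfrac12,2-c,1)$; the $_9F_8$ transformation \eqref{9F8} enters only once, to rewrite the resulting right-hand side as the elementary series $\sum_{k\ge0}\frac{4}{1+2k}\frac{(\frac32-c)_k(c-\frac12)_k}{(\frac12+c)_k(\frac52-c)_k}$. The crucial structural point is that the whole identity then depends on a \emph{single} parameter $c$ and is invariant under $c\leftrightarrow 2-c$. So after one application of $\mathcal D_c$ both sides vanish at $c=1$; dividing by $2-2c$ and letting $c\to1$ converts every difference $H_k(c-1)-H_k(1-c)$ into $\sum_i\frac{1}{(c-1+i)(1-c+i)}\to H_k^{(2)}$ with \emph{no} $A_k\cdot A_k$ cross-products to eliminate, and the right-hand side collapses directly to $\tfrac{31}{2}\zeta(5)-14\zeta(3)$ without any Euler-sum identities. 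Combining with \eqref{zeta-tb} removes the $\zeta(3)$ piece.

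By contrast, in your scheme two things are asserted but not established: (i) that shifts $\alpha_i$ exist which respect the balance constraint of \eqref{9F8}, annihilate all $A_k(x)A_k(y)$ cross-terms at order $\varepsilon^2$, \emph{and} leave precisely the combination $4H_{1+2k}^{(2)}-3H_k^{(2)}$ on the left; and (ii) that the second $_9F_8$ on the right of \eqref{9F8}, once perturbed, reduces at order $\varepsilon^2$ to a closed form rather than another series of the same complexity. Your built-in consistency check is also self-defeating: killing the cross-terms at $\varepsilon^2$ essentially forces the deformation to be even in $\varepsilon$, so the $\varepsilon^1$-coefficient then vanishes and cannot reproduce Theorem~\ref{thm-c}. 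In the paper, Theorems~\ref{thm-c} and~\ref{thm-d} arise from \emph{different} one-parameter specializations of \eqref{equation-bb}, not from consecutive orders of a single expansion.
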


Encouraged by \eqref{zeta-td}, we shall establish the following four
theorems.

\begin{thm}\label{thm-e}
\begin{align}
\sum_{k=0}^{\infty}\bigg(\frac{1}{16}\bigg)^k\frac{(1)_k^2}{(\frac{3}{2})_k^2}
\bigg\{\frac{19+30k}{(1+k)(1+2k)}H_{1+2k}+\frac{13}{2(1+k)^2}\bigg\}=24\zeta(4).
 \label{eq:wei-e}
\end{align}
\end{thm}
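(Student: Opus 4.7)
Since \eqref{eq:wei-e} is a harmonic-number refinement of the Chu--Zhang $\zeta(3)$-identity \eqref{zeta-td}, my approach is the operator method: embed \eqref{zeta-td} into a one-parameter family of hypergeometric identities evaluable in closed form, then differentiate at the distinguished parameter value. I would first rewrite \eqref{zeta-td} in the well-poised hypergeometric shape
\begin{align*}
\sum_{k=0}^\infty \frac{1}{16^k}\,\frac{(1)_k^3(1/2)_k}{(3/2)_k^3(2)_k}\,(19+30k) = 16\zeta(3),
\end{align*}
using $1/(1+k) = (1)_k/(2)_k$ and $1/(1+2k) = (1/2)_k/(3/2)_k$. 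This realisation fits the template of a limiting or terminating case of the $_9F_8$ transformation \eqref{9F8}. Introducing an auxiliary parameter $\varepsilon$ by jointly shifting the half-integer pair $((1/2),(3/2))$ and, compensatingly, the integer pair $((1),(2))$ in a way that respects the balance conditions $\lambda = 1+2a-b-c-d$ and $2+3a = b+c+d+e+f+g-n$ of \eqref{9F8}, I obtain a parametric identity $\Phi(\varepsilon) = \Psi(\varepsilon)$ with $\Phi(0) = \Psi(0) = 16\zeta(3)$ whose right-hand side $\Psi(\varepsilon)$ is a ratio of gamma functions.

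Differentiating at $\varepsilon = 0$ and applying
\begin{align*}
\frac{\partial}{\partial \varepsilon}(a+\varepsilon)_k\bigg|_{\varepsilon=0} = (a)_k\sum_{j=0}^{k-1}\frac{1}{a+j},
\end{align*}
converts the left-hand side into a series with harmonic-number coefficients. In particular $\sum_{j=0}^{k-1}(j+3/2)^{-1} = 2H_{2k+1} - H_k - 2$, while the $(1+\varepsilon)_k$-derivative contributes $H_k$; arranging the perturbation so that the $H_k$-contributions exactly cancel leaves precisely $H_{1+2k}$ multiplied by $(19+30k)/[(1+k)(1+2k)]$, together with a leftover rational term that should consolidate into $13/[2(1+k)^2]$ after the constant $-2$ above is distributed among the summands. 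On the right-hand side, differentiating the closed-form gamma-ratio $\Psi(\varepsilon)$ at $\varepsilon = 0$ yields $16\zeta(3) \cdot \left(\sum c_i \psi(a_i)\right)$, a linear combination of digamma values at $1$ and $1/2$, which, together with the $\zeta(3)$-contributions from the logarithmic derivative of the prefactor, should collapse via standard polygamma identities to a pure multiple of $\zeta(4) = \pi^4/90$; matching the coefficient gives $24\zeta(4)$.

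The principal obstacle is threefold. First, the perturbation must be chosen so that the harmonic-number combinations on the left aggregate precisely to $H_{1+2k}$ rather than a generic linear combination with $H_k$, which fixes the allowed shifts essentially uniquely. Second, the rational residue from the $\varepsilon$-expansion must consolidate into the compact form $13/[2(1+k)^2]$; this is a delicate bookkeeping exercise involving the constant term $-2$ from the half-integer $\psi$-derivative. Third, one must verify that no spurious mixed constants (such as $\zeta(3)\log 2$ or $\pi^2\log^2 2$) appear on the right-hand side, so that the answer is a pure multiple of $\zeta(4)$; this is expected from the purity of the $\zeta(3)$-evaluation in \eqref{zeta-td}, but it still requires a careful expansion of $\Psi(\varepsilon)$ through order $\varepsilon$ using \eqref{9F8}.
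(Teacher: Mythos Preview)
Your overall strategy---deform \eqref{zeta-td} into a one-parameter identity and differentiate---is exactly right, and is what the paper does. But there is a genuine gap in your execution: you assume that the deformed right-hand side $\Psi(\varepsilon)$ is a ratio of gamma functions, so that differentiating produces digamma values. This cannot work. The value $\Psi(0)=16\zeta(3)$ is not a gamma ratio (gamma ratios at rational arguments give algebraic multiples of powers of $\pi$, never $\zeta(3)$), so no closed-form $\Gamma$-product can specialise to it. Moreover, \eqref{9F8} is a transformation between two ${}_9F_8$ series, not a summation formula; it does not hand you a closed form on one side. Consequently your plan for extracting a ``pure multiple of $\zeta(4)$'' from digamma/polygamma values at $1$ and $1/2$ has no starting point.

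The paper avoids this by using a different transformation, \eqref{equation-th} (Chu--Zhang, Theorem~14), specialised at $(a,b,d,e)=(2,1,1,1)$, which equates the $c$-deformed left-hand side not to a closed form but to the elementary series $\sum_{k\ge0}\dfrac{2}{(1+k)^2}\dfrac{(c)_k}{(3-c)_k}$. Applying $\mathcal{D}_c$ and then letting $c\to1$ turns this right-hand side into $32\sum_{k\ge0}H_{1+k}/(1+k)^3-16\zeta(4)-16\zeta(3)$; the appearance of $\zeta(4)$ then comes from Euler's identity \eqref{zeta4-a}, $\sum_{k\ge1}H_k/k^3=\tfrac{5}{4}\zeta(4)$, not from any polygamma expansion. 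Combining with \eqref{zeta-td} removes the $\zeta(3)$ term and yields \eqref{eq:wei-e}. So the missing ingredient in your proposal is precisely this: the correct parametric identity has a \emph{series}, not a gamma product, on the right, and the source of $\zeta(4)$ is the Euler sum \eqref{zeta4-a}.
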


\begin{thm}\label{thm-f}
\begin{align}
\sum_{k=0}^{\infty}\bigg(\frac{1}{16}\bigg)^k\frac{(1)_k^2}{(\frac{3}{2})_k^2}
\bigg\{\frac{19+30k}{(1+k)(1+2k)}(15H_{1+2k}-26H_{k}\Big)+\frac{104}{(1+2k)^2}\bigg\}=360\zeta(4).
 \label{eq:wei-f}
\end{align}
\end{thm}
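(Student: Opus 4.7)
The plan is to derive Theorem~\ref{thm-f} by applying the operator method to the same hypergeometric identity that produces Chu--Zhang's $\zeta(3)$ formula (\ref{zeta-td}) as its base case. Because the summand carries $\tfrac{(1)_k^2}{(3/2)_k^2}$ rather than the quintic shape $\tfrac{(1)_k^5}{(3/2)_k^5}$ appearing in Theorems~\ref{thm-a}--\ref{thm-d}, the appropriate starting point is not the full $_9F_8$ transformation (\ref{9F8}) but a lower-order analogue (a well-poised $_5F_4$ or a suitable specialization/quadratic companion of (\ref{9F8})) whose $n\to\infty$ limit yields exactly (\ref{zeta-td}).

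Concretely, I would introduce free parameters $\varepsilon_1,\varepsilon_2$ shifting selected numerator/denominator arguments of that transformation, keeping both sides analytic near $(0,0)$. Extracting the $\varepsilon$-constant term recovers (\ref{zeta-td}), while differentiation brings down harmonic-number factors via the standard identity
\[
\tfrac{d}{d\varepsilon}\log(x+\varepsilon)_k\big|_{\varepsilon=0}=H_{k}^{(1)}(x-1).
\]
A single-parameter derivative in one coordinate already produces Theorem~\ref{thm-e}, whose summand carries $H_{1+2k}$ together with a $\tfrac{1}{(1+k)^2}$ correction. To reach Theorem~\ref{thm-f} one takes a \emph{different} directional derivative (a specific linear combination of $\partial_{\varepsilon_1},\partial_{\varepsilon_2}$) whose coefficients generate the integer weights $15$ and $-26$ before $H_{1+2k}$ and $H_{k}$, the residual $\tfrac{104}{(1+2k)^2}$ correction, and a right-hand side that collapses to $360\zeta(4)$. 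The final evaluation of the right-hand side will use Euler's identity (\ref{zeta4-a}), namely $\sum_{k\ge1}H_k/k^3=\tfrac{5}{4}\zeta(4)$, together with $\zeta(4)=\pi^4/90$, to reduce the various auxiliary harmonic-number sums to a single multiple of $\zeta(4)$.

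The main obstacle is pinpointing the precise parametric perturbation that reproduces exactly the integer coefficients $15,-26,104$ and $360$ appearing in (\ref{eq:wei-f}); several natural shifts give identities of the same general shape but with different constants, so some trial is unavoidable. Once the correct linear combination of derivatives is located, the remainder of the proof is routine Taylor bookkeeping, standard harmonic-number algebra (splitting $H_{1+2k}$ via $H_{2k+1}=H_{2k}+\tfrac{1}{2k+1}$ and the $H_{2k}$--$H_k$ doubling relation), and classical zeta-value evaluations, entirely parallel to the operator-method computations that underlie Theorems~\ref{thm-a}--\ref{thm-e}.
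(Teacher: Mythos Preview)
Your plan matches the paper's approach: the relevant transformation is Chu--Zhang's Theorem~14, recorded here as (\ref{equation-th}), and the paper obtains Theorem~\ref{thm-f} by differentiating (\ref{equation-th}) in $b$ at the specialization $(a,c,d,e)=(2,1,1,1)$ to get an intermediate identity (\ref{eq:wei-thd}), then forming the linear combination $13\times(\ref{eq:wei-thd})+2\times(\ref{eq:wei-e})$ and cleaning up with (\ref{zeta-td}) and (\ref{zeta4-a}). Your proposed two-parameter directional derivative is exactly this linear combination in disguise, so the ``trial'' you anticipate is resolved by the coefficients $(13,2)$, which produce $15H_{1+2k}-26H_k$, cancel the $1/(1+k)^2$ terms, and leave $104/(1+2k)^2$ and $360\zeta(4)$ on the nose.
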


\begin{thm}\label{thm-g}
\begin{align}
\sum_{k=0}^{\infty}\bigg(\frac{1}{16}\bigg)^k\frac{(1)_k^2}{(\frac{3}{2})_k^2}
\bigg\{\frac{19+30k}{(1+k)(1+2k)}H_{k}^{(2)}-\frac{17}{2(1+k)^3}\bigg\}
=-8\zeta(5).
 \label{eq:wei-g}
\end{align}
\end{thm}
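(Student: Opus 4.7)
\medskip
\noindent\textbf{Proof sketch.} The plan is to derive Theorem~\ref{thm-g} as the second-derivative part of an operator-method calculation applied to a one-parameter deformation of the companion series \eqref{zeta-td}. Specifically, I would introduce an auxiliary variable $x$ and construct a parametric identity of the shape
$$S(x):=\sum_{k=0}^{\infty}\bigg(\frac{1}{16}\bigg)^k\frac{(1+x)_k^2}{(\tfrac{3}{2})_k^2}\,\frac{P(k,x)}{(1+k+x)(1+2k)}=F(x),$$
with $S(0)=16\zeta(3)$ reproducing \eqref{zeta-td} and $F(x)$ given in closed form (a ratio of Gamma functions, or a Whipple--Dixon-type ${}_3F_2$ evaluation). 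Such a deformation can be read off from the $_9F_8$ transformation \eqref{9F8} by letting one of its free parameters depend on $x$, taking the limit $n\to\infty$ on one side, and evaluating the resulting well-poised tail on the other.

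Next I would apply $\partial_x^2|_{x=0}$ to both sides. The elementary derivatives
$$\partial_x^2\log(1+x)_k\big|_{x=0}=-H_k^{(2)},\qquad \partial_x^2(1+k+x)^{-j}\big|_{x=0}=\frac{j(j+1)}{(1+k)^{j+2}},$$
combined with the Leibniz rule, convert $S''(0)$ into a linear combination of the two target series
$$\sum_{k\ge 0}\bigg(\tfrac{1}{16}\bigg)^k\frac{(1)_k^2}{(\tfrac{3}{2})_k^2}\,\frac{H_k^{(2)}}{(1+k)(1+2k)}\quad\text{and}\quad \sum_{k\ge 0}\bigg(\tfrac{1}{16}\bigg)^k\frac{(1)_k^2}{(\tfrac{3}{2})_k^2}\,\frac{1}{(1+k)^3},$$
accompanied by cross-terms involving $H_k$ and $\frac{H_k}{(1+k)^2}$. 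These cross-terms can be eliminated by subtracting appropriate multiples of \eqref{zeta-td} and of the first-derivative companions Theorems~\ref{thm-e}--\ref{thm-f} (which themselves arise from $S'(0)$), leaving precisely the bracket $\frac{19+30k}{(1+k)(1+2k)}H_k^{(2)}-\frac{17}{2(1+k)^3}$ on the left. On the right, $F''(0)$ is evaluated via $\psi^{(m)}(1)=(-1)^{m+1}m!\,\zeta(m+1)$ and its half-integer analogues; after the lower-weight contributions cancel against the identities corresponding to $F(0)$ and $F'(0)$, the clean residue $-8\zeta(5)$ remains.

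The main obstacle will be the first step: pinpointing the correct parametric extension of \eqref{zeta-td} whose closed form $F(x)$ is smooth at $x=0$ and whose second derivative assembles exactly the bracket appearing in \eqref{eq:wei-g}, without extraneous terms that cannot be reabsorbed by the companion identities already at our disposal. Once $F(x)$ is fixed, the remaining manipulations reduce to careful but routine differentiation and polygamma bookkeeping.
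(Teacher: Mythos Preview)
Your overall strategy---deform \eqref{zeta-td} by a parameter and differentiate---is the same philosophy the paper uses, but the execution you outline differs from the paper's in three essential respects, and your version is both harder and less determinate.

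First, the paper does \emph{not} build the deformation from the $_9F_8$ transformation \eqref{9F8}; it uses the Chu--Zhang accelerated identity \eqref{equation-th} with the specialization $(a,b,d,e)=(2,1,1,2-c)$, leaving $c$ free. Second, the right-hand side $F$ is not a closed-form Gamma ratio as you anticipate; it is another infinite series,
\[
\sum_{k\ge 0}\frac{2}{1+k}\,\frac{(c)_k(2-c)_k}{(3-c)_k(1+c)_k},
\]
whose derivative (after the manipulation below) evaluates termwise to $\zeta$-values at $c=1$. Third, and most importantly, the paper avoids the genuine second derivative altogether. The chosen deformation is symmetric under $c\mapsto 2-c$, so after applying $\mathcal{D}_c$ once, every surviving term carries an explicit factor of $(1-c)$ or a difference $H_k(c-1)-H_k(1-c)$, etc. Dividing both sides by $2-2c$ and letting $c\to 1$ extracts the second-order content directly, with \emph{no} $H_k$ cross-terms arising; the $H_k^{(2)}$ and $1/(1+k)^3$ pieces assemble into the bracket of \eqref{eq:wei-g} immediately, and the only external input needed is \eqref{zeta-td} (not Theorems~\ref{thm-e}--\ref{thm-f}).

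Your plan, by contrast, leaves the crucial parametric identity unspecified, expects a closed form that is not there, and introduces second-derivative cross-terms that you then propose to cancel against other theorems. That may be workable, but the paper's symmetric-parameter trick is what makes the computation clean; without it you are facing the very obstacle you flag in your final paragraph.
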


\begin{thm}\label{thm-h}
\begin{align}
\sum_{k=0}^{\infty}\bigg(\frac{1}{16}\bigg)^k\frac{(1)_k^2}{(\frac{3}{2})_k^2}
\bigg\{\frac{19+30k}{(1+k)(1+2k)}\Big[H_{1+2k}^{(2)}-2H_{k}^{(2)}\Big]-\frac{9}{4(1+k)^3}\bigg\}
=16\zeta(5)
 \label{eq:wei-h}
\end{align}
\end{thm}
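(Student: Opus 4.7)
The plan is to apply the operator method to \eqref{zeta-td} via the $_9F_8$ transformation \eqref{9F8}. Since the base identity \eqref{zeta-td} equals $16\zeta(3)$ while Theorem \ref{thm-h} equals $16\zeta(5)$ with the same rational prefactor $\frac{19+30k}{(1+k)(1+2k)}$, one expects the target to arise by extracting a second-order coefficient from a suitable parametric extension of \eqref{zeta-td}.

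First, I would embed \eqref{zeta-td} in a one-parameter family. Specializing \eqref{9F8} by taking $b,c,d,e,f,g$ as functions of an auxiliary parameter $\epsilon$ (together with the limit $n\to\infty$) so that setting $\epsilon=0$ recovers \eqref{zeta-td}, the parameter $\epsilon$ is inserted into the Pochhammer symbols so that differentiating in $\epsilon$ introduces $H_k^{(\ell)}$ and $H_{1+2k}^{(\ell)}$ terms, the latter arising naturally through a duplication identity applied to $(3/2+\epsilon)_k$. Expanding in powers of $\epsilon$: the $\epsilon^0$ coefficient reproduces \eqref{zeta-td}; the $\epsilon^2$ coefficient yields an $H^{(2)}$-type identity whose right-hand side should reduce to a multiple of $\zeta(5)$. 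The specific linear combination $H_{1+2k}^{(2)}-2H_k^{(2)}$ with correction $-\frac{9}{4(1+k)^3}$ would then be obtained by combining this $\epsilon^2$-identity with Theorem \ref{thm-g}, eliminating the terms involving $H_k^{(2)}$ alone and pinning down the constant $16$ on the right.

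The main obstacle is algebraic bookkeeping: second derivatives of Pochhammer ratios generally produce mixed squared first-order terms such as $(H_k)^2$, $H_k\, H_{1+2k}$, and $(H_{1+2k})^2$, which must cancel against one another before the clean $H^{(2)}$ form emerges. Achieving this cancellation requires choosing the $\epsilon$-insertion so that the two contributions from $(1)_k^2$ and $(3/2)_k^2$ interfere appropriately, or equivalently invoking the second hypergeometric transformation mentioned in the abstract as a consistency check. A secondary difficulty is to verify that the resulting right-hand side reduces exactly to $16\zeta(5)$ without contamination by $\zeta(2)\zeta(3)$ or $\zeta(4)\log 2$-type terms; this should follow from an auxiliary evaluation of a related very-well-poised sum via Dougall's theorem or a further specialization of \eqref{9F8}, feeding back into the $\epsilon$-expansion to kill the unwanted constants.
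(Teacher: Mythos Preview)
Your plan is not yet a proof, and its central strategy diverges from the paper's in a way that creates exactly the obstacle you identify. You propose to expand a parametric deformation to \emph{second} order in $\epsilon$, which inevitably produces cross terms $(H_k)^2$, $H_kH_{1+2k}$, $(H_{1+2k})^2$; you then hope these cancel but give no mechanism for why they should. The paper avoids this difficulty altogether by a structural trick you are missing.

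The paper does not use the $_9F_8$ transformation \eqref{9F8} for this theorem; it uses the transformation \eqref{equation-th}. Specializing \eqref{equation-th} at $(a,c,d,e)=(2,1,2-b,1)$ gives an identity that is manifestly symmetric under $b\leftrightarrow 2-b$. Because of this symmetry, applying $\mathcal{D}_b$ once yields on both sides combinations of the form $H_k(b-1)-H_k(1-b)$ and $H_{2k}(2-b)-H_{2k}(b)$, each of which vanishes at $b=1$. Dividing by $2-2b$ and invoking the elementary identity
\[
\frac{H_m(x+u)-H_m(v-x)}{v-u-2x}=\sum_{i=1}^m\frac{1}{(x+u+i)(v-x+i)}
\]
converts these first-order harmonic differences, in the limit $b\to1$, directly into $H_k^{(2)}$ and $H_{1+2k}^{(2)}$. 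No second derivative is taken, so no squared first-order harmonic terms ever appear. The right-hand side becomes $16\zeta(5)-16\zeta(3)$, and a single subtraction of \eqref{zeta-td} removes the $\zeta(3)$, giving \eqref{eq:wei-h} on the nose; there is no contamination by $\zeta(2)\zeta(3)$ or $\zeta(4)\log 2$ to worry about.

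In short, the missing idea is the symmetric one-parameter deformation plus the ``differentiate once, divide by the odd factor'' device, which replaces your second-order expansion and eliminates the bookkeeping problem you flagged as the main obstacle.
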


We point out that the combination of  \eqref{eq:wei-g} and
\eqref{eq:wei-h} can provide the following interesting conclusion:
\begin{align*}
\sum_{k=0}^{\infty}\bigg(\frac{1}{16}\bigg)^k\frac{(1)_k^2}{(\frac{3}{2})_k^2}\frac{19+30k}{(1+k)(1+2k)}
\Big\{34H_{1+2k}^{(2)}-77H_{k}^{(2)}\Big\}=616\zeta(5).
\end{align*}

Via the limit of the difference operator, we may define the
derivative operator $\mathcal{D}_{x}$ by
\begin{align*}
\mathcal{D}_{x}f(x)=\lim_{\Delta x\to0}\frac{f(x+\Delta
x)-f(x)}{\Delta x}.
\end{align*}
Thus there is  the relation:
\begin{align*}
&\mathcal{D}_{x}(x)_n=(x)_nH_n(x-1).
\end{align*}

The structure of the paper is organized as follows. Through the
derivative operator and two hypergeometric transformations, we shall
verify Theorem \ref{thm-a}-\ref{thm-d} in Section 2. Similarly, the
proof of Theorems \ref{thm-e}-\ref{thm-h} will be displayed in
Section 3.

\section{Proof of Theorems \ref{thm-a}-\ref{thm-d}}
For the aim to prove Theorem \ref{thm-a}, we call for the following
lemma.

\begin{lem}\label{lemm-a}
The series
\begin{align*}
g(b)=\sum_{k=0}^{\infty}\frac{1}{(1+k)^2}\frac{(b)_k}{(3-b)_{k}}
\end{align*}
is uniformly convergent in the open interval
$(\frac{1}{2},\frac{3}{2})$.
\end{lem}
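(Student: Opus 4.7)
The plan is to apply the Weierstrass M-test after obtaining a simple uniform bound for the general term on the open interval $(\tfrac12,\tfrac32)$. The key observation is that on this interval the two parameters satisfy $0<b\le 3-b$, so the Pochhammer ratio is manageable termwise.

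First I would check positivity: for every $b\in(\tfrac12,\tfrac32)$ and every integer $j\ge 0$ one has $b+j>0$ and $3-b+j>0$, so $(b)_k$ and $(3-b)_k$ are positive. Then I would observe that $b\le 3-b$ holds throughout $(\tfrac12,\tfrac32)$, which yields $\dfrac{b+j}{3-b+j}\le 1$ for each $j\ge 0$ and therefore
\begin{align*}
0<\frac{(b)_k}{(3-b)_k}=\prod_{j=0}^{k-1}\frac{b+j}{3-b+j}\le 1 \qquad (k\ge 0).
\end{align*}
Consequently the general term of $g(b)$ is dominated by
\begin{align*}
\left|\frac{1}{(1+k)^2}\frac{(b)_k}{(3-b)_k}\right|\le \frac{1}{(1+k)^2}
\end{align*}
uniformly in $b$ on $(\tfrac12,\tfrac32)$.

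Since $\sum_{k=0}^{\infty}\frac{1}{(1+k)^2}=\zeta(2)<\infty$, the Weierstrass M-test then delivers uniform convergence of $g(b)$ on $(\tfrac12,\tfrac32)$, completing the proof. The only step that requires any care is the elementary inequality $b\le 3-b$ and the resulting termwise bound; once that is in place the conclusion is immediate. There is no real obstacle here, so I expect this lemma to serve mainly as a tool permitting differentiation under the summation sign at the value $b=1$ (or another interior point) in the subsequent derivation of Theorem~\ref{thm-a}.
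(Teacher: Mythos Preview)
Your proof is correct and essentially identical to the paper's: both bound the ratio $(b)_k/(3-b)_k$ by $1$ on $(\tfrac12,\tfrac32)$ and then apply the Weierstrass M-test with majorant $1/(1+k)^2$. The paper obtains the bound by comparing with the endpoint value $b=\tfrac32$ (writing $(b)_k/(3-b)_k\le(\tfrac32)_k/(\tfrac32)_k=1$), whereas you argue the same inequality directly factor by factor via $b\le 3-b$; the content is the same.
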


\begin{proof}
For any $b\in(\frac{1}{2},\frac{3}{2})$, we have
\begin{align*}
g(b)\leq\sum_{k=0}^{\infty}\frac{1}{(1+k)^2}\frac{(\frac{3}{2})_k}{(3-\frac{3}{2})_{k}}
=\sum_{k=0}^{\infty}\frac{1}{(1+k)^2}.
\end{align*}
Noting that the right series is convergent, we conclude that $g(b)$
is uniformly convergent in the interval $(\frac{1}{2},\frac{3}{2})$
thanks to Weierstrass's M-test.
\end{proof}

\begin{proof}[{\bf{Proof of Theorem \ref{thm-a}}}]
Remember a hypergeometric transformation (cf. \cite[Theorem
31]{Chu-b}):
\begin{align}
&\sum_{k=0}^{\infty}(-1)^k\frac{(b)_k(c)_k(d)_k(e)_k(1+a-b-c)_k(1+a-b-d)_{k}(1+a-b-e)_{k}}{(1+a-b)_{2k}(1+a-c)_{2k}(1+a-d)_{2k}(1+a-e)_{2k}}
\notag\\[1mm]
&\quad\times\frac{(1+a-c-d)_k(1+a-c-e)_{k}(1+a-d-e)_{k}}{(1+2a-b-c-d-e)_{2k}}\alpha_k(a,b,c,d,e)
\notag\\[1mm]
&\:=\sum_{k=0}^{\infty}(a+2k)\frac{(b)_k(c)_k(d)_k(e)_k}{(1+a-b)_{k}(1+a-c)_{k}(1+a-d)_{k}(1+a-e)_{k}},
\label{equation-aa}
\end{align}
where $\mathfrak{R}(1+2a-b-c-d-e)>0$ and

\begin{align*}
&\alpha_k(a,b,c,d,e)\\[1mm]
&\:=\frac{(1+2a-b-c-d+3k)(a-e+2k)}{1+2a-b-c-d-e+2k}+\frac{(e+k)(1+a-b-c+k)}{(1+a-b+2k)(1+a-d+2k)}
\\[1mm]
&\quad\times\frac{(1+a-b-d+k)(1+a-c-d+k)(2+2a-b-d-e+3k)}{(1+2a-b-c-d-e+2k)(2+2a-b-c-d-e+2k)}
\\[1mm]
&\:+\frac{(c+k)(e+k)(1+a-b-c+k)(1+a-b-d+k)}{(1+a-b+2k)(1+a-c+2k)(1+a-d+2k)(1+a-e+2k)}
\\[1mm]
&\quad\times\frac{(1+a-b-e+k)(1+a-c-d+k)(1+a-d-e+k)}{(1+2a-b-c-d-e+2k)(2+2a-b-c-d-e+2k)}.
\end{align*}

Choose $(a,c,d,e)=(2,1,1,1)$ in \eqref{equation-aa} to obtain
\begin{align}
\sum_{k=0}^{\infty}\bigg(\frac{-1}{64}\bigg)^k\frac{(b)_k(2-b)_k^3(1)_k^3}{(2-b)_{2k}(3-b)_{2k}(\frac{3}{2})_{k}^3}A_k(b)
=\sum_{k=0}^{\infty}\frac{2}{(1+k)^2}\frac{(b)_k}{(3-b)_{k}},
\label{eq:wei-aa}
\end{align}
where
\begin{align*}
A_k(b)
&=\frac{(1+2k)(3-b+3k)}{2-b+2k}+\frac{(1+k)(4-b+3k)(2-b+k)^2}{2(2-b+2k)(3-b+2k)^2}
\\[1mm]
&\quad+\frac{(1+k)(2-b+k)^3}{8(2-b+2k)(3-b+2k)^2}.
\end{align*}
By means of Lemma \ref{lemm-a}, we learn that the right series
 is uniformly convergent in $(\frac{1}{2},\frac{3}{2})$.
It is obvious that the left series is uniformly convergent in the
same interval. Apply the operator $\mathcal{D}_{b}$ on both sides of
\eqref{eq:wei-aa} to get
\begin{align*}
&\sum_{k=0}^{\infty}\bigg(\frac{-1}{64}\bigg)^k\frac{(b)_k(2-b)_k^3(1)_k^3}{(2-b)_{2k}(3-b)_{2k}(\frac{3}{2})_{k}^3}A_k(b)
\notag\\[1mm]
&\quad\times\Big\{H_{k}(b-1)-3H_{k}(1-b)+H_{2k}(1-b)+H_{2k}(2-b)\Big\}
\notag\\[1mm]
&\:+\sum_{k=0}^{\infty}\bigg(\frac{-1}{64}\bigg)^k\frac{(b)_k(2-b)_k^3(1)_k^3}{(2-b)_{2k}(3-b)_{2k}(\frac{3}{2})_{k}^3}\mathcal{D}_{b}A_k(b)
\notag\\[1mm]
&\:\:= \sum_{k=0}^{\infty}\frac{2}{(1+k)^2}\frac{(b)_k}{(3-b)_{k}}
\Big\{H_{k}(b-1)+H_{k}(2-b)\Big\}.
\end{align*}
The $b\to1$ case of it becomes
\begin{align}
&\sum_{k=0}^{\infty}\bigg(\frac{-1}{1024}\bigg)^k\frac{(1)_k^5}{(\frac{3}{2})_k^5}
\Big\{(77+250k+205k^2)\Big(H_{1+2k}-H_{k}-\tfrac{1}{2}\Big)-41k-25\Big\}
\notag\\[1mm]
&\:=64\sum_{k=0}^{\infty}\frac{H_{1+k}}{(1+k)^3}-32\zeta(4)-32\zeta(3)
=48\zeta(4)-32\zeta(3).
 \label{eq:wei-cc}
\end{align}
where we have used \eqref{zeta4-a} in the last step. Therefore, the
combination of \eqref{zeta-ta} with \eqref{eq:wei-cc} engenders
\eqref{eq:wei-a}.
\end{proof}

 For the purpose of proving Theorem \ref{thm-b}, we ask for the
following lemma.

\begin{lem}\label{lemm-b}
The series
\begin{align*}
g(b)=\sum_{k=0}^{\infty}\frac{1}{1+k}\frac{(b)_k(2-b)_k}{(3-b)_{k}(1+b)_{k}}
\end{align*}
is uniformly convergent in the open interval
$(\frac{1}{2},\frac{3}{2})$.
\end{lem}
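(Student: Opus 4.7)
The plan is to mirror the proof of Lemma \ref{lemm-a}: exhibit a summable, $b$-independent majorant for the general term and then invoke Weierstrass's M-test. The new wrinkle here is that the quotient $(b)_k(2-b)_k/((3-b)_k(1+b)_k)$ does not collapse to $1$ as cleanly as in Lemma \ref{lemm-a}, so a preliminary algebraic simplification is needed before any estimation.

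First I would telescope the shifted factorials via the two identities $(b)_k/(1+b)_k=b/(k+b)$ and $(2-b)_k/(3-b)_k=(2-b)/(k+2-b)$, each of which follows immediately from writing out numerator and denominator and cancelling the common interior factors. This collapses the general term of $g(b)$ to
\[
\frac{1}{1+k}\cdot\frac{b(2-b)}{(k+b)(k+2-b)}.
\]

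Next I would bound this term uniformly on $b\in(\tfrac12,\tfrac32)$. The quadratic $b(2-b)$ attains its maximum value $1$ at $b=1$, while each of the denominator factors $k+b$ and $k+2-b$ strictly exceeds $k+\tfrac12$. Hence for every $b$ in the interval and every $k\ge 0$,
\[
\frac{1}{1+k}\cdot\frac{b(2-b)}{(k+b)(k+2-b)}\;\le\;\frac{1}{(1+k)(k+\tfrac12)^2}\;\le\;\frac{4}{(1+k)^3},
\]
independently of $b$. Since $\sum_{k\ge 0}4/(1+k)^3=4\zeta(3)<\infty$, Weierstrass's M-test yields the desired uniform convergence of $g(b)$ on $(\tfrac12,\tfrac32)$.

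I do not anticipate any serious obstacle; the whole argument parallels Lemma \ref{lemm-a} once the telescoping identities are recognized. The only small point worth checking is that the majorant dominates even at $k=0$, where the general term equals $1$; this is immediate since the bound there is $4$. The telescoping in the first step is the one place where a mechanical slip could occur, so I would double-check the cancellation of $(3-b)(4-b)\cdots(1-b+k)$ in the ratio $(2-b)_k/(3-b)_k$ explicitly before stating the simplified form.
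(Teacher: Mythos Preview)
Your argument is correct. After the same telescoping step that the paper also uses (writing the $k$-th term as $b(2-b)/\bigl((1+k)(k+b)(k+2-b)\bigr)$), you diverge from the paper's proof: you bound the individual term by the summable majorant $4/(1+k)^3$ and invoke Weierstrass's M-test, exactly as in Lemma~\ref{lemm-a}. The paper instead applies Dirichlet's uniform convergence test, treating $1/(1+k)$ as the monotone-to-zero factor and showing that the partial sums $s_n(b)=\sum_{k=0}^n b(2-b)/\bigl((k+b)(k+2-b)\bigr)$ are uniformly bounded (by $\tfrac{9}{8}\pi^2$). Your route is more direct and stays parallel to Lemma~\ref{lemm-a}; the paper's Dirichlet argument is slightly more elaborate than necessary here, though it would still succeed in borderline situations where the full product is not itself dominated by a summable sequence. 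Either way, the essential content---the telescoping and the lower bound $(k+b)(k+2-b)>(k+\tfrac12)^2$---is identical.
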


\begin{proof}
Let
\begin{align*}
u_k(b)=\frac{1}{1+k},\quad
s_{n}(b)=\sum_{k=0}^n\frac{(b)_k(2-b)_k}{(3-b)_{k}(1+b)_{k}}.
\end{align*}
Considering that the sequence $\{u_k(b)\}_{k\geq0}$ is descending on
$k$ and is uniformly convergent to $0$ in
$(\frac{1}{2},\frac{3}{2})$, it is enough to prove that the sequence
$\{s_n(b)\}_{n\geq0}$ is
  uniformly bounded in $(\frac{1}{2},\frac{3}{2})$ in the light of
Dirichlet's uniform convergence test.

For any $b\in(\frac{1}{2},\frac{3}{2})$, it is ordinary to
understand that
\begin{align*}
s_{n}(b)&=\sum_{k=0}^n\frac{b(2-b)}{(b+k)(2-b+k)}
\leq\frac{9}{4}\sum_{k=0}^n\frac{1}{(\frac{1}{2}+k)^2}\leq9\sum_{k=0}^{\infty}\frac{1}{(1+2k)^2}=\frac{9}{8}\pi^2.
\end{align*}
So we complete the proof of Lemma \ref{lemm-b}.

\end{proof}

\begin{proof}[{\bf{Proof of Theorem \ref{thm-b}}}]
Fix $(a,c,d,e)=(2,2-b,1,1)$ in \eqref{equation-aa} to deduce
\begin{align}
\sum_{k=0}^{\infty}\bigg(\frac{-1}{64}\bigg)^k\frac{(b)_k^3(2-b)_k^3(1)_k}{(1+b)_{2k}(3-b)_{2k}(\frac{1}{2})_{k}(\frac{3}{2})_{k}^2}B_k(b)
=\sum_{k=0}^{\infty}\frac{2}{1+k}\frac{(b)_k(2-b)_k}{(3-b)_{k}(1+b)_{k}},
\label{eq:wei-dd}
\end{align}
where
\begin{align*}
B_k(b) &=(2+3k)+\frac{(b+k)(2-b+k)(4-b+3k)}{4(1+2k)(3-b+2k)}
\\[1mm]
&\quad+\frac{(b+k)(2-b+k)^3}{8(1+2k)(1+b+2k)(3-b+2k)}.
\end{align*}
According to Lemma \ref{lemm-b}, we comprehand that the right series
 is uniformly convergent in
$(\frac{1}{2},\frac{3}{2})$. It is clear that the left series is
uniformly convergent in the same interval. Employ the operator
$\mathcal{D}_{b}$ on both sides of \eqref{eq:wei-dd} to discover
\begin{align}
&\sum_{k=0}^{\infty}\bigg(\frac{-1}{64}\bigg)^k\frac{(b)_k^3(2-b)_k^3(1)_k}{(1+b)_{2k}(3-b)_{2k}(\frac{1}{2})_{k}(\frac{3}{2})_{k}^2}B_k(b)
\notag\\[1mm]\notag
&\quad\times\Big\{3H_{k}(b-1)-3H_{k}(1-b)+H_{2k}(2-b)-H_{2k}(b)\Big\}
\end{align}
\begin{align}
&\:+\sum_{k=0}^{\infty}\bigg(\frac{-1}{64}\bigg)^k\frac{(b)_k^3(2-b)_k^3(1)_k}{(1+b)_{2k}(3-b)_{2k}(\frac{1}{2})_{k}(\frac{3}{2})_{k}^2}\mathcal{D}_{b}B_k(b)
\notag\\[1mm]
&\:\:=
\sum_{k=0}^{\infty}\frac{2}{1+k}\frac{(b)_k(2-b)_k}{(3-b)_{k}(1+b)_{k}}
\Big\{H_{k}(b-1)-H_{k}(1-b)+H_{k}(2-b)-H_{k}(b)\Big\}.
\label{eq:wei-ee}
\end{align}
Dividing both sides by $2-2b$ and turning to the relation
 \begin{align*}
 \frac{1}{v-u-2x}\Big\{H_m(x+u)-H_m(v-x)\Big\}=\sum_{i=1}^m\frac{1}{(x+u+i)(v-x+i)},
\end{align*}
which will usually be utilized without indication, \eqref{eq:wei-ee}
can be manipulated as
\begin{align*}
&\sum_{k=0}^{\infty}\bigg(\frac{-1}{64}\bigg)^k\frac{(b)_k^3(2-b)_k^3(1)_k}{(1+b)_{2k}(3-b)_{2k}(\frac{1}{2})_{k}(\frac{3}{2})_{k}^2}B_k(b)
\notag\\[1mm]
&\quad\times\bigg\{\sum_{i=1}^k\frac{3}{(b-1+i)(1-b+i)}-\sum_{i=1}^{2k}\frac{1}{(b+i)(2-b+i)}\bigg\}
\notag\\[1mm]
&\:+\sum_{k=0}^{\infty}\bigg(\frac{-1}{64}\bigg)^k\frac{(b)_k^3(2-b)_k^3(1)_k}{(1+b)_{2k}(3-b)_{2k}(\frac{1}{2})_{k}(\frac{3}{2})_{k}^2}\frac{\mathcal{D}_{b}B_k(b)}{2-2b}
\notag\\[1mm]
&\:\:=
\sum_{k=0}^{\infty}\frac{2}{1+k}\frac{(b)_k(2-b)_k}{(3-b)_{k}(1+b)_{k}}
\bigg\{\sum_{j=1}^k\frac{1}{(b-1+j)(1-b+j)}-\sum_{j=1}^{k}\frac{1}{(b+j)(2-b+j)}\bigg\}.
\end{align*}
The $b\to1$ case of the last equation is
\begin{align}
&\sum_{k=0}^{\infty}\bigg(\frac{-1}{1024}\bigg)^k\frac{(1)_k^5}{(\frac{3}{2})_k^5}
\Big\{(77+250k+205k^2)\Big[4H_{1+2k}^{(2)}-12H_{k}^{(2)}-4\Big]-43\Big\}
\notag\\[1mm]
&\:=256\zeta(5)-256\zeta(3).
 \label{eq:wei-ff}
\end{align}
Hence, the combination of \eqref{zeta-ta} with \eqref{eq:wei-ff}
produces \eqref{eq:wei-b}.
\end{proof}

So as to prove Theorem \ref{thm-c}, we demand the following lemma.

\begin{lem}\label{lemm-c}
The series
\begin{align*}
g(c)=\sum_{k=0}^{\infty}\frac{2-c+2k}{(1+2k)^2}\frac{(\frac{3}{2}-c)_k^2}{(\frac{5}{2}-c)_{k}^2}
\end{align*}
is uniformly convergent in the open interval
$(\frac{3}{4},\frac{5}{4})$.
\end{lem}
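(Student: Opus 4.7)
The plan is to reduce the general term to an explicit closed form, bound it by a $c$-independent summable majorant on $(\tfrac{3}{4},\tfrac{5}{4})$, and then invoke Weierstrass's M-test, exactly in the spirit of Lemma \ref{lemm-a}.

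The first step is to exploit the fact that the parameters in the two shifted factorials differ by exactly $1$, giving a telescoping collapse. Using the identity $(a)_k/(a+1)_k=a/(a+k)$ with $a=\tfrac{3}{2}-c$, I would write
\begin{align*}
\frac{(\tfrac{3}{2}-c)_k^2}{(\tfrac{5}{2}-c)_k^2}=\frac{(\tfrac{3}{2}-c)^2}{(\tfrac{3}{2}-c+k)^2},
\end{align*}
so that the general term of $g(c)$ becomes
\begin{align*}
t_k(c)=\frac{(2-c+2k)\,(\tfrac{3}{2}-c)^2}{(1+2k)^2\,(\tfrac{3}{2}-c+k)^2}.
\end{align*}
This closed form is the essential reduction; once it is in hand, uniform convergence is transparent.

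Next I would bound $|t_k(c)|$ uniformly for $c\in(\tfrac{3}{4},\tfrac{5}{4})$. On this interval $\tfrac{3}{2}-c\in(\tfrac{1}{4},\tfrac{3}{4})$ is positive, so $(\tfrac{3}{2}-c)^2\leq\tfrac{9}{16}$ and $\tfrac{3}{2}-c+k\geq\tfrac{1}{4}+k$. Similarly $|2-c+2k|\leq 2+2k$, hence
\begin{align*}
|t_k(c)|\leq\frac{2+2k}{(1+2k)^2}\cdot\frac{9/16}{(\tfrac{1}{4}+k)^2}=:M_k,
\end{align*}
and a trivial estimate gives $M_k=O(k^{-3})$ as $k\to\infty$, so $\sum_{k\geq 0}M_k<\infty$. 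Weierstrass's M-test then delivers the uniform convergence of $g(c)$ on $(\tfrac{3}{4},\tfrac{5}{4})$, completing the proof.

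No step here is really an obstacle; the only place one could slip is in overlooking the telescoping simplification and trying to estimate the ratio $(\tfrac{3}{2}-c)_k^2/(\tfrac{5}{2}-c)_k^2$ directly via Gamma-function asymptotics, which is unnecessary. With the identity $(a)_k/(a+1)_k=a/(a+k)$ noted up front, the proof parallels Lemma \ref{lemm-a} almost verbatim, differing only in that the majorant behaves like $k^{-3}$ rather than $k^{-2}$.
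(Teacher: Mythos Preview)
Your proof is correct and follows essentially the same approach as the paper: both collapse the Pochhammer ratio via $(\tfrac{3}{2}-c)_k/(\tfrac{5}{2}-c)_k=(\tfrac{3}{2}-c)/(\tfrac{3}{2}-c+k)$, then bound the resulting closed form by a $c$-independent $O(k^{-3})$ majorant and invoke Weierstrass's M-test. The only differences are cosmetic choices of constants in the majorant.
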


\begin{proof}
For any $c\in(\frac{3}{4},\frac{5}{4})$, there holds
\begin{align*}
g(c)=\sum_{k=0}^{\infty}\frac{2-c+2k}{(1+2k)^2}\frac{(3-2c)^2}{(3-2c+2k)^2}
\leq\sum_{k=0}^{\infty}\frac{2+4k}{(1+2k)^2}\frac{(\frac{3}{2})^2}{(\frac{1}{2}+k)^2}
=\sum_{k=0}^{\infty}\frac{18}{(1+2k)^3}.
\end{align*}
Noting that the right series is convergent, we infer that $g(c)$ is
uniformly convergent in the interval $(\frac{3}{4},\frac{5}{4})$ via
Weierstrass's M-test.
\end{proof}

\begin{proof}[{\bf{Proof of Theorem \ref{thm-c}}}]
Memorize a hypergeometric transformation (cf. \cite[Theorem
9]{Chu-b}):
\begin{align}
&\sum_{k=0}^{\infty}\frac{(c)_k(d)_k(e)_k(1+a-b-c)_k(1+a-b-d)_{k}(1+a-b-e)_{k}}{(1+a-c)_{k}(1+a-d)_{k}(1+a-e)_{k}(1+2a-b-c-d-e)_{k}}
\notag\\[1mm]
&\quad\times\frac{(-1)^k}{(1+a-b)_{2k}}\beta_k(a,b,c,d,e)
\notag\\[1mm]
&\:=\sum_{k=0}^{\infty}(a+2k)\frac{(b)_k(c)_k(d)_k(e)_k}{(1+a-b)_{k}(1+a-c)_{k}(1+a-d)_{k}(1+a-e)_{k}},
\label{equation-bb}
\end{align}
where  $\mathfrak{R}(1+2a-b-c-d-e)>0$ and
\begin{align*}
\beta_k(a,b,c,d,e)&=\frac{(1+2a-b-c-d+2k)(a-e+k)}{1+2a-b-c-d-e+k}
\\[1mm]
&\quad+\frac{(1+a-b-c+k)(1+a-b-d+k)(e+k)}{(1+a-b+2k)(1+2a-b-c-d-e+k)}.
\end{align*}
Set $(a,b,d,e)=(\frac{3}{2},\frac{1}{2},1,1)$ in \eqref{equation-bb}
to obtain
\begin{align}
&\sum_{k=0}^{\infty}\bigg(\frac{-1}{4}\bigg)^k\frac{(c)_k(2-c)_k(1)_k^3}{(\frac{3}{2}-c)_{k}(\frac{5}{2}-c)_{k}(\frac{3}{2})_{k}^3}
\frac{9-4c+20k-6ck+10k^2}{3-2c+2k}
\notag\\[1mm]
&\:=\sum_{k=0}^{\infty}\frac{3+4k}{(1+k)(1+2k)}\frac{(c)_k(1)_k}{(\frac{5}{2}-c)_{k}(\frac{3}{2})_{k}}.
\label{eq:wei-gg}
\end{align}
The $(a,b,d,e,f,n)\to(\frac{3}{2},1,1,\frac{1}{2},1,\infty)$ case of
\eqref{9F8} can be written as
\begin{align}
\sum_{k=0}^{\infty}\frac{3+4k}{(1+k)(1+2k)}\frac{(c)_k(1)_k}{(\frac{5}{2}-c)_{k}(\frac{3}{2})_{k}}
=\frac{4}{3-2c}\sum_{k=0}^{\infty}\frac{2-c+2k}{(1+2k)^2}\frac{(\frac{3}{2}-c)_k^2}{(\frac{5}{2}-c)_{k}^2}.
\label{eq:wei-hh}
\end{align}
Substituting \eqref{eq:wei-hh} into \eqref{eq:wei-gg}, we get
\begin{align}
&\sum_{k=0}^{\infty}\bigg(\frac{-1}{4}\bigg)^k\frac{(c)_k(2-c)_k(1)_k^3}{(\frac{3}{2}-c)_{k}(\frac{5}{2}-c)_{k}(\frac{3}{2})_{k}^3}
\frac{9-4c+20k-6ck+10k^2}{3-2c+2k}
\notag\\[1mm]
&\:=\sum_{k=0}^{\infty}\frac{4(2-c+2k)}{(1+2k)^2(3-2c+2k)}\frac{(\frac{3}{2}-c)_k}{(\frac{5}{2}-c)_{k}}.
\label{eq:wei-ii}
\end{align}
Lemma \ref{lemm-b} shows that the right series is uniformly
convergent in $(\frac{3}{4},\frac{5}{4})$. It is evident that the
left series  is uniformly convergent in the same interval. Apply the
operator $\mathcal{D}_{c}$ on both sides of \eqref{eq:wei-ii} to
gain

\begin{align*}
&\sum_{k=0}^{\infty}\bigg(\frac{-1}{4}\bigg)^k\frac{(c)_k(2-c)_k(1)_k^3}{(\frac{3}{2}-c)_{k}(\frac{5}{2}-c)_{k}(\frac{3}{2})_{k}^3}
\frac{9-4c+20k-6ck+10k^2}{3-2c+2k}
\notag\\[1mm]
&\quad\times\Big\{H_{k}(c-1)-H_{k}(1-c)+H_{k}(\tfrac{1}{2}-c)+H_{k}(\tfrac{3}{2}-c)\Big\}
\notag\\[1mm]
&\:+\sum_{k=0}^{\infty}\bigg(\frac{-1}{4}\bigg)^k\frac{(c)_k(2-c)_k(1)_k^3}{(\frac{3}{2}-c)_{k}(\frac{5}{2}-c)_{k}(\frac{3}{2})_{k}^3}
\frac{2(1+k)(3+4k)}{(3-2c+2k)^2}
\notag\\[1mm]
&\:=\sum_{k=0}^{\infty}\frac{4(2-c+2k)}{(1+2k)^2(3-2c+2k)}\frac{(\frac{3}{2}-c)_k}{(\frac{5}{2}-c)_{k}}
\Big\{H_{k}(\tfrac{3}{2}-c)-H_{k}(\tfrac{1}{2}-c)\Big\}
\notag\\[1mm]
&\:+\sum_{k=0}^{\infty}\frac{4}{(1+2k)(3-2c+2k)^2}\frac{(\frac{3}{2}-c)_k}{(\frac{5}{2}-c)_{k}}.
\end{align*}
The $c\to1$ case of  the last equation reads
\begin{align}
&\sum_{k=0}^{\infty}\bigg(\frac{-1}{4}\bigg)^k\frac{(1)_k^5}{(\frac{3}{2})_k^5}
\Big\{(5+14k+10k^2)\Big(2H_{1+2k}-H_{k}-1\Big)-3k-2\Big\}
\notag\\[1mm]
&\:=\frac{45}{8}\zeta(4)-\frac{7}{2}\zeta(3).
 \label{eq:wei-jj}
\end{align}
By the aid of \eqref{zeta-tb} and \eqref{eq:wei-jj}, we may catch
hold of \eqref{eq:wei-c}.
\end{proof}

 For the sake of proving Theorem \ref{thm-d}, we need the
following lemma.

\begin{lem}\label{lemm-d}
The series
\begin{align*}
g(c)=\sum_{k=0}^{\infty}\frac{1}{1+2k}\frac{(\frac{3}{2}-c)_k(c-\frac{1}{2})_k}{(\frac{1}{2}+c)_k(\frac{5}{2}-c)_k}
\end{align*}
is uniformly convergent in the open interval
$(\frac{3}{4},\frac{5}{4})$.
\end{lem}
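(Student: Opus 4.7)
The plan is to mimic the Dirichlet-test strategy employed in the proof of Lemma \ref{lemm-b}. I would factor the summand as $u_k(c)\,v_k(c)$ where
$$u_k(c)=\frac{1}{1+2k},\qquad v_k(c)=\frac{(\frac{3}{2}-c)_k(c-\frac{1}{2})_k}{(\frac{1}{2}+c)_k(\frac{5}{2}-c)_k}.$$
The sequence $\{u_k\}$ is independent of $c$, strictly decreasing in $k$, and converges (trivially uniformly) to $0$. Hence by Dirichlet's uniform convergence test it suffices to show that the partial sums $s_n(c)=\sum_{k=0}^{n} v_k(c)$ are uniformly bounded in $c$ on $(\frac{3}{4},\frac{5}{4})$.

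To bound $s_n(c)$, the key simplification is the telescoping identity $(\alpha)_k/(\alpha+1)_k=\alpha/(\alpha+k)$, the same device used in Lemma \ref{lemm-b}. Applying it with $\alpha=\frac{3}{2}-c$ (since $\frac{5}{2}-c=(\frac{3}{2}-c)+1$) and with $\alpha=c-\frac{1}{2}$ (since $\frac{1}{2}+c=(c-\frac{1}{2})+1$) collapses each Pochhammer quotient and gives
$$v_k(c)=\frac{(\frac{3}{2}-c)(c-\frac{1}{2})}{(\frac{3}{2}-c+k)(c-\frac{1}{2}+k)}.$$
For $c\in(\frac{3}{4},\frac{5}{4})$ both $\frac{3}{2}-c$ and $c-\frac{1}{2}$ lie in $(\frac{1}{4},\frac{3}{4})$, so every term is positive, the numerator is bounded above by $\frac{9}{16}$, and each denominator factor exceeds $\frac{1}{4}+k$. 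Consequently,
$$s_n(c)\leq\frac{9}{16}\sum_{k=0}^{\infty}\frac{1}{(\frac{1}{4}+k)^2},$$
which is a finite $c$-independent constant. This yields the required uniform bound on $\{s_n\}$, and Dirichlet's test then concludes the proof.

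I do not anticipate any genuine obstacle; the argument is essentially routine and structurally parallel to Lemma \ref{lemm-b}. The only point that deserves a sentence of verification is that all $v_k(c)$ are positive on the interval in question, which makes $s_n(c)$ a sum of positive numbers and legitimizes the crude termwise upper bound (no hidden sign cancellations to exploit or control). The final write-up should be only a few lines.
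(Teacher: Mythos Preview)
Your proposal is correct and follows essentially the same approach as the paper's own proof: the same Dirichlet-test decomposition with $u_k=\frac{1}{1+2k}$, the same telescoping simplification of the Pochhammer ratio, and the same uniform bound $s_n(c)\le \frac{9}{16}\sum_{k\ge 0}\frac{1}{(\frac14+k)^2}$. The paper merely pushes the bound one step further to the explicit value $\frac{9}{8}\pi^2$, but this is cosmetic.
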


\begin{proof}
Let
\begin{align*}
v_k(c)=\frac{1}{1+2k},\quad
t_{n}(c)=\sum_{k=0}^n\frac{(\frac{3}{2}-c)_k(c-\frac{1}{2})_k}{(\frac{1}{2}+c)_k(\frac{5}{2}-c)_k}.
\end{align*}
The sequence $\{v_k(c)\}_{k\geq0}$ is descending on $k$ and is
uniformly convergent to $0$ in $(\frac{3}{4},\frac{5}{4})$. In view
 of the relation:
\begin{align*}
t_{n}(c)&=\sum_{k=0}^n\frac{(\frac{3}{2}-c)(c-\frac{1}{2})}{(\frac{3}{2}-c+k)(c-\frac{1}{2}+k)}
\leq\frac{9}{16}\sum_{k=0}^n\frac{1}{(\frac{1}{4}+k)^2}\leq9\sum_{k=0}^{\infty}\frac{1}{(1+4k)^2}
\\
&\leq9\sum_{k=0}^{\infty}\frac{1}{(1+2k)^2}=\frac{9}{8}\pi^2,
\end{align*}
we judge that the sequence $\{t_n(c)\}_{n\geq0}$ is
  uniformly bounded in $(\frac{3}{4},\frac{5}{4})$. Through
Dirichlet's uniform convergence test, we think that $g(c)$ is
uniformly convergent in $(\frac{3}{4},\frac{5}{4})$.
 \end{proof}

\begin{proof}[{\bf{Proof of Theorem \ref{thm-d}}}]
Select $(a,b,d,e)=(\frac{3}{2},\frac{1}{2},2-c,1)$ in
\eqref{equation-bb} to procure
\begin{align}
&\sum_{k=0}^{\infty}\bigg(\frac{-1}{4}\bigg)^k\frac{(c)_k^2(2-c)_k^2(1)_k}{(\frac{5}{2}-c)_{k}(\frac{1}{2}+c)_{k}(\frac{1}{2})_{k}(\frac{3}{2})_{k}^2}
\bigg\{(3+4k)+\frac{2(c+k)(2-c+k)}{1+2k}\bigg\}
\notag\\[1mm]
&\:=\sum_{k=0}^{\infty}\frac{3+4k}{(1+k)(1+2k)}\frac{(c)_k(2-c)_k}{(\frac{5}{2}-c)_{k}(\frac{1}{2}+c)_{k}}.
\label{eq:wei-kk}
\end{align}
The $(a,b,d,e,f,n)\to(\frac{3}{2},1,2-c,\frac{1}{2},1,\infty)$ case
of  \eqref{9F8} can be stated as
\begin{align}
\sum_{k=0}^{\infty}\frac{3+4k}{(1+k)(1+2k)}\frac{(c)_k(2-c)_k}{(\frac{5}{2}-c)_{k}(\frac{1}{2}+c)_{k}}
=\sum_{k=0}^{\infty}\frac{4}{1+2k}\frac{(\frac{3}{2}-c)_k(c-\frac{1}{2})_k}{(\frac{1}{2}+c)_k(\frac{5}{2}-c)_k}.
\label{eq:wei-lla}
\end{align}
Substituting \eqref{eq:wei-lla} into \eqref{eq:wei-kk}, we have
\begin{align}
&\sum_{k=0}^{\infty}\bigg(\frac{-1}{4}\bigg)^k\frac{(c)_k^2(2-c)_k^2(1)_k}{(\frac{5}{2}-c)_{k}(\frac{1}{2}+c)_{k}(\frac{1}{2})_{k}(\frac{3}{2})_{k}^2}
\bigg\{(3+4k)+\frac{2(c+k)(2-c+k)}{1+2k}\bigg\}
\notag\\
&\:=\sum_{k=0}^{\infty}\frac{4}{1+2k}\frac{(\frac{3}{2}-c)_k(c-\frac{1}{2})_k}{(\frac{1}{2}+c)_k(\frac{5}{2}-c)_k}.
\label{eq:wei-ll}
\end{align}
Lemma \ref{lemm-d} demonstrates that the right series is uniformly
convergent in $(\frac{3}{4},\frac{5}{4})$. It is apparent that the
left series is uniformly convergent in the same interval. Employ the
operator $\mathcal{D}_{c}$ on both sides of \eqref{eq:wei-ll} to
find
\begin{align*}
&\sum_{k=0}^{\infty}\bigg(\frac{-1}{4}\bigg)^k\frac{(c)_k^2(2-c)_k^2(1)_k}{(\frac{5}{2}-c)_{k}(\frac{1}{2}+c)_{k}(\frac{1}{2})_{k}(\frac{3}{2})_{k}^2}
\bigg\{(3+4k)+\frac{2(c+k)(2-c+k)}{1+2k}\bigg\}
\notag\\[1mm]
&\quad\times\Big\{2H_{k}(c-1)-2H_{k}(1-c)+H_{k}(\tfrac{3}{2}-c)-H_{k}(c-\tfrac{1}{2})\Big\}
\notag\\
&\:+\sum_{k=0}^{\infty}\bigg(\frac{-1}{4}\bigg)^k\frac{(c)_k^2(2-c)_k^2(1)_k}{(\frac{5}{2}-c)_{k}(\frac{1}{2}+c)_{k}(\frac{1}{2})_{k}(\frac{3}{2})_{k}^2}
\frac{4(1-c)}{1+2k}
\notag\\
&\:=\sum_{k=0}^{\infty}\frac{4}{1+2k}\frac{(\frac{3}{2}-c)_k(c-\frac{1}{2})_k}{(\frac{1}{2}+c)_k(\frac{5}{2}-c)_k}
\Big\{H_{k}(c-\tfrac{3}{2})-H_{k}(\tfrac{1}{2}-c)+H_{k}(\tfrac{3}{2}-c)-H_{k}(c-\tfrac{1}{2})\Big\}.
\end{align*}
Dividing both sides by $2-2c$ and then letting $c\to1$, it is
routine to see that
\begin{align}
&\sum_{k=0}^{\infty}\bigg(\frac{-1}{4}\bigg)^k\frac{(1)_k^5}{(\frac{3}{2})_k^5}
\Big\{(5+14k+10k^2)\Big[4H_{1+2k}^{(2)}-3H_{k}^{(2)}-4\Big]-2\Big\}
\notag\\
&\:=\frac{31}{2}\zeta(5)-14\zeta(3).
 \label{eq:wei-mm}
\end{align}
With the help of \eqref{zeta-tb} and \eqref{eq:wei-mm}, we are led
to \eqref{eq:wei-d}.
\end{proof}
\section{Proof of Theorems
\ref{thm-e}-\ref{thm-h}}

In this section,  we shall use Lemmas \ref{lemm-a}-\ref{lemm-b}
without explanation. For proving Theorems \ref{thm-e}-\ref{thm-h},
we require the following hypergeometric transformation (cf.
\cite[Theorem 14]{Chu-b}):
\begin{align}
&\sum_{k=0}^{\infty}\frac{(c)_k(e)_k(1+a-b-c)_k(1+a-b-e)_{k}(1+a-c-d)_{k}(1+a-d-e)_{k}}{(1+a-c)_{k}(1+a-e)_{k}}
\notag\\[1mm]
&\quad\times\frac{(1+a-b-d)_{2k}\,\omega_k(a,b,c,d,e)}{(1+a-b)_{2k}(1+a-d)_{2k}(1+2a-b-c-d-e)_{2k}}
\notag\\[1mm]
&=\sum_{k=0}^{\infty}(a+2k)\frac{(b)_k(c)_k(d)_k(e)_k}{(1+a-b)_{k}(1+a-c)_{k}(1+a-d)_{k}(1+a-e)_{k}},
\label{equation-th}
\end{align}
where  $\mathfrak{R}(1+2a-b-c-d-e)>0$ and
\begin{align*}
\omega_k(a,b,c,d,e)&=\frac{(1+2a-b-c-d+3k)(a-e+k)}{1+2a-b-c-d-e+2k}
\\[1mm]
&\quad+\frac{(e+k)(1+a-b-c+k)}{(1+a-b+2k)(1+a-d+2k)}
\\[1mm]
&\quad\times\frac{(1+a-c-d+k)(1+a-b-d+2k)(2+2a-b-d-e+3k)}{(1+2a-b-c-d-e+2k)(2+2a-b-c-d-e+2k)}.
\end{align*}

Firstly, we plan to prove Theorem \ref{thm-e}.

\begin{proof}[{\bf{Proof of Theorem \ref{thm-e}}}]
Choose $(a,b,d,e)=(2,1,1,1)$ in \eqref{equation-th} to obtain
\begin{align}
\sum_{k=0}^{\infty}\bigg(\frac{1}{4}\bigg)^k\frac{(1)_k(c)_k(2-c)_k^2}{(\frac{3}{2})_{k}(3-c)_{k}(2-c)_{2k}}
E_k(c)=\sum_{k=0}^{\infty}\frac{2}{(1+k)^2}\frac{(c)_k}{(3-c)_{k}}.
\label{eq:wei-tha}
\end{align}
where
\begin{align*}
E_k(c)=
\frac{3-c+3k}{(1+2k)(2-c+2k)}+\frac{3(2-c+k)^2}{4(1+k)(2-c+2k)(3-c+2k)}.
\end{align*}
It is not difficult to realize that these series on both sides are
uniformly convergent in $(\frac{1}{2},\frac{3}{2})$. Apply the
operator $\mathcal{D}_{c}$ on both sides of \eqref{eq:wei-tha} to
get
\begin{align*}
&\sum_{k=0}^{\infty}\bigg(\frac{1}{4}\bigg)^k\frac{(1)_k(c)_k(2-c)_k^2}{(\frac{3}{2})_{k}(3-c)_{k}(2-c)_{2k}}E_k(c)
\notag\\[1mm]
&\quad\times\Big\{H_{k}(c-1)+H_{k}(2-c)+H_{2k}(1-c)-2H_{k}(1-c)\Big\}
\notag\\[1mm]
&\:+\sum_{k=0}^{\infty}\bigg(\frac{1}{4}\bigg)^k\frac{(1)_k(c)_k(2-c)_k^2}{(\frac{3}{2})_{k}(3-c)_{k}(2-c)_{2k}}\mathcal{D}_{c}E_k(c)
\notag\\[1mm]
&\:\:= \sum_{k=0}^{\infty}\frac{2}{(1+k)^2}\frac{(c)_k}{(3-c)_{k}}
\Big\{H_{k}(c-1)+H_{k}(2-c)\Big\}.
\end{align*}
The $b\to1$ case of it becomes
\begin{align}
&\sum_{k=0}^{\infty}\bigg(\frac{1}{16}\bigg)^k\frac{(1)_k^2}{(\frac{3}{2})_k^2}
\bigg\{\frac{19+30k}{(1+k)(1+2k)}\Big(H_{1+2k}-1\Big)+\frac{13}{2(1+k)^2}\bigg\}
\notag\\[1mm]
&\:=32\sum_{k=0}^{\infty}\frac{H_{1+k}}{(1+k)^3}-16\zeta(4)-16\zeta(3).
\label{eq:wei-thb}
\end{align}
In terms of \eqref{zeta-td}, \eqref{zeta4-a}, and
\eqref{eq:wei-thb}, we can arrive at \eqref{eq:wei-e}.
\end{proof}

Secondly, we start to prove Theorem \ref{thm-f}.

\begin{proof}[{\bf{Proof of Theorem \ref{thm-f}}}]
Fix $(a,c,d,e)=(2,1,1,1)$ in \eqref{equation-th} to deduce
\begin{align}
\sum_{k=0}^{\infty}\bigg(\frac{1}{4}\bigg)^k\frac{(1)_k(2-b)_k^2}{(\frac{3}{2})_{k}(3-b)_{2k}}
F_k(b)=\sum_{k=0}^{\infty}\frac{2}{(1+k)^2}\frac{(b)_k}{(3-b)_{k}}.
\label{eq:wei-thc}
\end{align}
where
\begin{align*}
F_k(b)=
\frac{3-b+3k}{(1+k)(2-b+2k)}+\frac{(2-b+k)(4-b+3k)}{2(1+k)(3-b+2k)^2}.
\end{align*}
It is clear that these series on both sides are uniformly convergent
in $(\frac{1}{2},\frac{3}{2})$. Employ the operator
$\mathcal{D}_{b}$ on both sides of \eqref{eq:wei-thc} to find
\begin{align*}
&\sum_{k=0}^{\infty}\bigg(\frac{1}{4}\bigg)^k\frac{(1)_k(2-b)_k^2}{(\frac{3}{2})_{k}(3-b)_{2k}}
F_k(b)\Big\{H_{2k}(2-b)-2H_{k}(1-b)\Big\}
\notag\\[1mm]
&\:+\sum_{k=0}^{\infty}\bigg(\frac{1}{4}\bigg)^k\frac{(1)_k(2-b)_k^2}{(\frac{3}{2})_{k}(3-b)_{2k}}\mathcal{D}_{b}F_k(b)
\notag\\[1mm]
&\:\:= \sum_{k=0}^{\infty}\frac{2}{(1+k)^2}\frac{(b)_k}{(3-b)_{k}}
\Big\{H_{k}(b-1)+H_{k}(2-b)\Big\}.
\end{align*}
The $b\to1$ case of the last identity is
\begin{align}
&\sum_{k=0}^{\infty}\bigg(\frac{1}{16}\bigg)^k\frac{(1)_k^2}{(\frac{3}{2})_k^2}
\bigg\{\frac{19+30k}{(1+k)(1+2k)}\Big(H_{1+2k}-2H_{k}-1\Big)+\frac{8}{(1+2k)^2}-\frac{1}{(1+k)^2}\bigg\}
\notag\\[1mm]
&\:=32\sum_{k=0}^{\infty}\frac{H_{1+k}}{(1+k)^3}-16\zeta(4)-16\zeta(3).
\label{eq:wei-thd}
\end{align}
By means of \eqref{zeta-td}, \eqref{zeta4-a}, \eqref{eq:wei-e}, and
\eqref{eq:wei-thd}, we can catch hold of \eqref{eq:wei-f}.
\end{proof}

Thirdly, we begin to prove Theorem \ref{thm-g}.

\begin{proof}
Set $(a,b,d,e)=(2,1,1,2-c)$ in \eqref{equation-th} to obtain
\begin{align}
&\sum_{k=0}^{\infty}\frac{(c)_k^3(2-c)_k^3}{(3-c)_{k}(1+c)_{k}(2)_{2k}^2}
\bigg\{\frac{(c+k)(3-c+3k)}{1+2k}+\frac{(2+c+3k)(2-c+k)^3}{8(1+k)^3}\bigg\}
\notag\\
&\:=\sum_{k=0}^{\infty}\frac{2}{1+k}\frac{(c)_k(2-c)_k}{(3-c)_{k}(1+c)_{k}}.
\label{eq:wei-the}
\end{align}
It is easy to understand that these series on both sides are
uniformly convergent in $(\frac{1}{2},\frac{3}{2})$. Apply the
operator $\mathcal{D}_{c}$ on both sides of \eqref{eq:wei-the} to
get
\begin{align*}
&\sum_{k=0}^{\infty}\frac{(c)_k^3(2-c)_k^3}{(3-c)_{k}(1+c)_{k}(2)_{2k}^2}
\bigg\{\frac{(c+k)(3-c+3k)}{1+2k}+\frac{(2+c+3k)(2-c+k)^3}{8(1+k)^3}\bigg\}
\notag\\[1mm]
&\quad\times\Big\{3H_{k}(c-1)-3H_{k}(1-c)+H_{k}(2-c)-H_{k}(c)\Big\}
\notag\\[1mm]
&\:+\sum_{k=0}^{\infty}\frac{(c)_k^3(2-c)_k^3}{(3-c)_{k}(1+c)_{k}(2)_{2k}^2}
\frac{(1-c)(2-2c+c^2+2k-4ck+2c^2k-3k^2-2k^3)}{2(1+k)^3(1+2k)}
\notag\\[1mm]
&\:\:=
\sum_{k=0}^{\infty}\frac{2}{1+k}\frac{(c)_k(2-c)_k}{(3-c)_{k}(1+c)_{k}}
\Big\{H_{k}(c-1)-H_{k}(1-c)+H_{k}(2-c)-H_{k}(c)\Big\}.
\end{align*}
Dividing both sides by $2-2c$ and then taking $c\to1$, we have
\begin{align}
&\sum_{k=0}^{\infty}\bigg(\frac{1}{16}\bigg)^k\frac{(1)_k^2}{(\frac{3}{2})_k^2}
\bigg\{\frac{19+30k}{(1+k)(1+2k)}\Big[2H_{k}^{(2)}+1\Big]-\frac{17}{(1+k)^3}\bigg\}
\notag\\[1mm]
&\:=16\zeta(3)-16\zeta(5).
 \label{eq:wei-thf}
\end{align}
According to \eqref{zeta-td} and \eqref{eq:wei-thf}, we discover
\eqref{eq:wei-g}.
\end{proof}

Finally, we shall certify Theorem \ref{thm-h}.

\begin{proof}
Select $(a,c,d,e)=(2,1,2-b,1)$ in \eqref{equation-th} to procure
\begin{align}
&\sum_{k=0}^{\infty}\frac{(b)_k^2(2-b)_k^2}{(3-b)_{2k}(1+b)_{2k}}
\bigg\{\frac{2+3k}{(1+k)(1+2k)}+\frac{3(b+k)(2-b+k)}{2(1+k)(1+b+2k)(3-b+2k)}\bigg\}
\notag\\
&\:=\sum_{k=0}^{\infty}\frac{2}{1+k}\frac{(b)_k(2-b)_k}{(3-b)_{k}(1+b)_{k}}.
\label{eq:wei-tho}
\end{align}
It is evident that these series on both sides are uniformly
convergent in $(\frac{1}{2},\frac{3}{2})$. Apply the operator
$\mathcal{D}_{b}$ on both sides of \eqref{eq:wei-tho} to gain

\begin{align*}
&\sum_{k=0}^{\infty}\frac{(b)_k^2(2-b)_k^2}{(3-b)_{2k}(1+b)_{2k}}
\bigg\{\frac{2+3k}{(1+k)(1+2k)}+\frac{3(b+k)(2-b+k)}{2(1+k)(1+b+2k)(3-b+2k)}\bigg\}
\notag\\[2mm]
&\quad\times\Big\{2H_{k}(b-1)-2H_{k}(1-b)+H_{2k}(2-b)-H_{2k}(b)\Big\}
\notag\\[1mm]
&\:+\sum_{k=0}^{\infty}\frac{(b)_k^2(2-b)_k^2}{(3-b)_{2k}(1+b)_{2k}}
\frac{9(1+k)(1-b)}{(1+b+2k)^2(3-b+2k)^2}
\notag\\[1mm]
&\:\:=
\sum_{k=0}^{\infty}\frac{2}{1+k}\frac{(b)_k(2-b)_k}{(3-b)_{k}(1+b)_{k}}
\Big\{H_{k}(b-1)-H_{k}(1-b)+H_{k}(2-b)-H_{k}(b)\Big\}.
\end{align*}
Dividing both sides by $2-2b$ and then taking $b\to1$, there holds
\begin{align}
&\sum_{k=0}^{\infty}\bigg(\frac{1}{16}\bigg)^k\frac{(1)_k^2}{(\frac{3}{2})_k^2}
\bigg\{\frac{19+30k}{(1+k)(1+2k)}\Big[H_{1+2k}^{(2)}-2H_{k}^{(2)}-1\Big]-\frac{9}{4(1+k)^3}\bigg\}
\notag\\[1mm]
&\:=16\zeta(5)-16\zeta(3).
 \label{eq:wei-thp}
\end{align}
With the help of \eqref{zeta-td} and \eqref{eq:wei-thp}, we are led
to \eqref{eq:wei-h}.
\end{proof}


\end{document}